\newtheorem{theorem}{Theorem}[section]
\newtheorem{corollary}[theorem]{Corollary}
\newtheorem{definition}[theorem]{Definition}
\newtheorem{lemma}[theorem]{Lemma}
\begin{document}
\title{Torus invariants of the homogeneous coordinate ring of $G/B$ -  connection with Coxeter elements }

\author{S. Senthamarai Kannan, B. Narasimha Chary, Santosha Kumar Pattanayak}
\maketitle

Chennai Mathematical Institute, Plot H1, SIPCOT IT Park, 

Siruseri, Kelambakkam  603103, Tamilnadu, India.

E-mail:kannan@cmi.ac.in, chary@cmi.ac.in, santosh@cmi.ac.in.

\begin{abstract}
In this article, we prove that for any indecomposable dominant character $\chi$ of a maximal torus $T$ of
 a simple adjoint group $G$ over $\mathbb C$ such that there is a Coxeter element $w$ in the Weyl group $W$ for which 
$X(w)^{ss}_T(\mathcal L_\chi) \neq \emptyset$,  the graded algebra 
$\oplus_{d \in \mathbb Z_{\geq 0}}H^0(G/B, \mathcal L_\chi^{\otimes d})^T$ is a 
polynomial ring if and only if  $dim(H^0(G/B , \mathcal L_{\chi})^T) \leq$ rank of $G$.
We also prove that  the co-ordinate ring $\mathbb C[\mathfrak{h}]$ of the cartan subalgebra $\mathfrak h$ of the Lie algebra $\mathfrak g$ of $G$ and 
$\oplus_{d \in \mathbb Z_{\geq 0}}H^0(G/B,\mathcal L_{\alpha_0}^{\otimes d})^T$ are isomorphic if 
and only if $X(w)_{T}^{ss}(\mathcal L_{\alpha_0})$ is non empty for some coxeter element $w$ in $W$, 
where $\alpha_0$ denotes the highest long root.
\end{abstract}

Keywords: Indecomposable dominant characters, Coxeter elements. 

\section{Introduction}

In \cite{r1},\cite{r10},\cite{r12} Chevalley, Serre, Shephard-Todd have proved that for any faithful 
representation $V$ of a finite group $H$ over the field $\mathbb C $ of complex numbers, the ring of $H$
invariants $\mathbb C[V]^{H}$ is a polynomial algebra if and only if $H$ is 
generated by pseudo reflections in $GL(V)$.

Chevalley also proved that for any semisimple algebraic group
$G$ over $\mathbb C$, the ring  $\mathbb C[\mathfrak{g}]^{G}$ of $G$ invariants of the 
co-ordinate ring $\mathbb C[\mathfrak{g}]$ of the adjoint representation $\mathfrak{g}$ of $G$
is a polynomial algebra(refer to page 127 in \cite{r3}).

In \cite{r13}, Steinberg proved that for any semisimple simply connected algebraic 
group $G$ (over any algebraically closed field $K$) acting on itself by
conjugation, the ring of $G$-invariants $K[G]^{G}$ is a polynomial algebra (refer to theorem in page 41 of \cite{r13}).

In\cite{r14}, D. Wehlau proved a theorem giving a  neccessary and sufficient condition for a rational 
representation $V$ of a torus $S$ for which the ring of $S$ invariants of 
the co-ordinate ring $K[V]$ is a polynomial algebra (refer to theorem 5.8 of \cite{r14}).  

We now set up some notation before proceeding further.

Let $G$ be a simple adjoint group of rank $n$
over the field of complex numbers.  Let $T$ be a maximal torus of $G$, 
$B$ be a Borel subgroup of $G$ containing $T$. 
Let $N_{G}(T)$ denote the normaliser of $T$ in $G$. Let $W=N_{G}(T)/T$
denote the Weyl group of $G$ with respect to $T$.

We denote by $\mathfrak{g}$ the Lie algebra of $G$. 
We denote by $\mathfrak{h}\subseteq \mathfrak{g}$ the Lie algebra of $T$.
Let $R$ denote the roots of $G$ with respect to $T$.
Let $R^{+}\subset R$ be the set of positive roots with respect to $B$.
Let $S=\{\alpha_{1}, \alpha_{2}, \cdots ,\alpha_n\}\subset R^{+}$ denote 
the set of simple roots with respect to $B$.
Let $\langle . , . \rangle$ denote the restriction of the Killing form to 
$\mathfrak{h}$.
Let $s_{i}$ denote the simple reflection corresponding to the simple root $\alpha_{i}$.

A element $w$ in $W$ is said to a Coxeter element if it has a reduced expression of the form $s_{i_{1}}s_{i_{2}} \cdots s_{i_{n}}$
such that $i_{j}\neq i_{k}$ whenever $j\neq k$ ( refer to \cite{r2}).

We denote by $G/B$, the flag variety of all Borel subgroups of $G$.
For any $w\in W$, we denote by $X(w)=\overline{BwB/B}\subset G/B$ the 
Schubert Variety corresponding to $w$. We note that $X(w)$ is stable 
for the left action of $T$ on $G/B$.

 We denote $\mathcal L_{\chi}$, 
the line bundle associated to a character $\chi$ of $T$.

$X(w)^{ss}_T(\mathcal L_\chi)$ denote the set of all semistable points with respect 
to the line bundle $\mathcal L_\chi$ and for the action of $T$(refer to \cite{r8},\cite{r9}).

In \cite{r5} and \cite{r6} some properties of semi stable points $X(w)_{T}^{ss}(\mathcal L_{\chi})$ with respect to dominant characters $\chi$ of $T$ are studied. For instance in \cite{r6}, for every simple algebraic group $G$ all the Coxeter elements $w$ in the Weyl group $W$ for 
which  $X(w)^{ss}_T(\mathcal L_\chi)$ is non empty  for some  non trivial dominant character $\chi$ of $T$ has been studied.

A further study about the dominant characters $\chi$ of $T$ for which there is a Coxeter element $w$ such that 
$X(w)_{T}^{ss}(\mathcal L_{\chi})$ is non empty, we observed that in the case of $A_2$, given an indecomposable dominant character $\chi$ of $T$ which is in the root lattice (refer to definition 4.1  for the indecomposable dominant character ), $X(w)_{T}^{ss}(\mathcal L_{\chi})$ is non empty for some coxeter element $w$ in $W$ if and only if $\chi$ must be one of the following: $\alpha_{1}+\alpha_{2}$, 
$2\alpha_{1}+\alpha_{2}$ and $\alpha_{1}+2\alpha_{2}$. We also observed  that for all these three dominant characters $\chi$, the ring of $T$-invariants of 
the homogeneous co-ordinate ring 
$\bigoplus_{d\in \mathbb Z_{\geq 0}}H^{0}(\mathbb PGL_3(\mathbb C)/B, \mathcal L_{\chi}^{\otimes d})$  
 is a polynomial ring.

 In case of $B_2$ as well, given an indecomposable dominant character $\chi$ of $T$ which is in the root lattice, $X(w)_{T}^{ss}(\mathcal L_{\chi})$ is non empty for some coxeter element $w$ in $W$ if and only if $\chi$ must be one of the following: $\alpha_{1}+\alpha_{2}$, 
$\alpha_{1}+2\alpha_{2}$. We also observed that for all these two dominant characters $\chi$, the ring of $T$-invariants of 
the homogeneous co-ordinate ring 
$\bigoplus_{d\in \mathbb Z_{\geq 0}}H^{0}( SO(5,\mathbb C)/B, \mathcal L_{\chi}^{\otimes d})$  
 is a polynomial ring.

The computations in the above mentioned special cases tempt us to ask the following question:

Let $G$ be a simple adjoint group over $\mathbb C$, the field of complex numbers. Let $T$ be a maximal torus of $G$, $B$ be a Borel subgroup 
of $G$ containing $T$. Then, for any indecomposable dominant character $\chi$ of 
$T$ such that there is a Coxeter element $w$ in $W$ such that 
$X(w)_{T}^{ss}(\mathcal L_{\chi})$ is non empty, is the ring of $T$- invariants of the 
homogeneous co-ordinate ring 
$\bigoplus_{d\in \mathbb Z_{\geq 0}}H^{0}(G/B, \mathcal L_{\chi}^{\otimes d})$ a polynomial 
algebra ?

In this paper we prove that for any indecomposable dominant character$\chi$ of a maximal torus $T$ of a simple adjoint group $G$ such that there is a Coxeter element $w \in W$ for which $X(w)^{ss}_T(\mathcal L_\chi) \neq \emptyset$,  the graded algebra 
$\oplus_{d \in \mathbb Z_{\geq 0}}H^0(G/B, \mathcal L_\chi^{\otimes d})^T$ is a 
polynomial ring if and only if  $dim(H^0(G/B , \mathcal L_{\chi})^T) \leq$ rank of $G$ 
(refer to theorem 4.8).

Let $\mathfrak g$ be the Lie algebra of $G$, let $\mathfrak h$ be the Lie algebra of $T$,
let $\alpha_0$ be the highest long root. Since 
$H^0(G/B, \mathcal L_{\alpha_0})$ is an irreducible self dual $G$ module with highest weight $\alpha_0$, the $G$ modules $H^0(G/B, \mathcal L_{\alpha_0})$, 
$Hom(\mathfrak g, \mathbb C)$ are isomorphic.

On the other hand, the natural $T$-invariant projection from $\mathfrak{g}$ to $\mathfrak{h}$ induces a isomorphism $Hom(\mathfrak h, \mathbb C)\rightarrow Hom(\mathfrak g, \mathbb C)^T $ 
So, we have  an isomorphism $Hom(\mathfrak h, \mathbb C)\rightarrow H^0(G/B, \mathcal L_{\alpha_0})^T$ .

Thus we have a homomorphism $f: \mathbb C[\mathfrak{h}] \longrightarrow \oplus_{d \in \mathbb Z_{\geq 0}}H^0(G/B,\mathcal L_{\alpha_0}^{\otimes d})^T$ of $\mathbb C$ algebras.

In this direction, we prove that the  homomorphism $f: \mathbb C[\mathfrak{h}] \longrightarrow \oplus_{d \in \mathbb Z_{\geq 0}}H^0(G/B,\mathcal L_{\alpha_0}^{\otimes d})^T$ is an isomorphism if and only if $X(w)_{T}^{ss}(\mathcal L_{\alpha_0})$ is non empty for some coxeter element $w$ in $W$ (refer to theorem 3.3).\\

The organisation of the paper is as follows:

Section 2 consists of  preliminaries and notations.

In section 3, we prove that  the homomorphism $f: \mathbb C[\mathfrak{h}] \rightarrow  \oplus_{d \in \mathbb Z_{\geq 0}}H^0(G/B,\mathcal L_{\alpha_0}^{\otimes d})^T$ is an isomorphism if and only if $X(w)_{T}^{ss}(\mathcal L_{\alpha_0})$ is non empty for some coxeter element $w$ in $W$.

In section 4,  we prove that for any indecomposable dominant character$\chi$ of a maximal torus $T$ of a simple adjoint group $G$ over $\mathbb C$ such that there is a Coxeter element $w \in W$ for which $X(w)^{ss}_T(\mathcal L_\chi) \neq \emptyset$,  the graded algebra 
$\oplus_{d \in \mathbb Z_{\geq 0}}H^0(G/B, \mathcal L_\chi^{\otimes d})^T$ is a 
polynomial ring if and only if  $dim(H^0(G/B , \mathcal L_{\chi})^T) \leq$ rank of $G$

\section{Preliminaries and Notations}

Let $G$ be a simple adjoint group of rank $n$
over the field of complex numbers.  Let $T$ be a maximal torus of $G$, 
$B$ be a Borel subgroup of $G$ containing $T$. 
Let $N_{G}(T)$ denote the normaliser of $T$ in $G$. Let $W=N_{G}(T)/T$
denote the Weyl group of $G$ with respect to $T$.

We denote by $\mathfrak{g}$ the Lie algebra of $G$. 
We denote by $\mathfrak{h}\subseteq \mathfrak{g}$ the Lie algebra of $T$.
Let $R$ denote the roots of $G$ with respect to $T$.
Let $R^{+}\subset R$ be the set of positive roots with respect to $B$.
Let $S=\{\alpha_{1}, \alpha_{2}, \cdots ,\alpha_n\}\subset R^{+}$ denote 
the set of simple roots with respect to $B$.
Let $\langle . , . \rangle$ denote the restriction of the Killing form to 
$\mathfrak{h}$.
Let $\check{\alpha}_i$ denote the co-root corresponding to $\alpha_{i}$.
Let $\varpi_1, \varpi_2,\cdots, \varpi_n$ fundamental weights corresponding to $S$.
Let $s_{i}$ denote the simple reflection corresponding to the simple root $\alpha_{i}$.
For any subset $J$ of $\{1,2,\cdots,n\}$, we denote by $W_J$ the subgroup of $W$ generated by $s_j$, $j \in J$. 
We denote $J^c$ is the complement of $J$ in $\{1,2,\cdots,n\}$.
We denote $P_J$ the parabolic subgroup of $G$ containing $B$ and $W_{J^c}$.
In particular, we denote the maximal parabolic subgroup of $G$ generated by $B$ and $\{s_j ; j \neq i \}$ by $P_i$
(refer to \cite {r4}).

Let $w_0$ denote the longest element of $W$ corresponding to $B$.
Let $B^-=w_0Bw_0^{-1}$ denote the Borel subgroup of $G$ opposite to $B$ with respect to $T$.

Let $\{E_{\beta}: \beta \in R\}\bigcup \{H_{\beta}: \beta \in S \} $ be the 
Chevalley basis  for $\mathfrak{g}$ (refer to \cite {r3}).  

A element $w$ in $W$ is said to a Coxeter element if it has a reduced expression of the form $s_{i_{1}}s_{i_{2}} \cdots s_{i_{n}}$
such that $i_{j}\neq i_{k}$ whenever $j\neq k$ ( refer to \cite{r2}).

We denote by $G/B$, the flag variety of all Borel subgroups of $G$.
For any $w\in W$, we denote by $X(w)=\overline{BwB/B}\subset G/B$ the 
Schubert Variety corresponding to $w$. We note that $X(w)$ is stable 
for the left action of $T$ on $G/B$.

Let $\chi \in 
X(B)$,   we 
have an action of $B$ on $\mathbb C$, namely $b.k=\lambda(b^{-1})k, \,\, b \in B, 
\,\, k \in \mathbb C$. $L:= G \times \mathbb C/ \sim$, where $\sim$ is the equivalence 
relation defined by $(gb, b.k) \sim (g, k), g \in G, b \in B, k \in \mathbb C$. Then $L$ is the 
total space of a line bundle over $G/B$. We denote by $\mathcal L_{\chi}$, 
the line bundle associated to $\chi$.

$X(w)^{ss}_T(\mathcal L_\chi)$ denote the set of all semistable points with respect 
to the line bundle $\mathcal L_\chi$ and for the action of $T$(for precise definition, refer to \cite{r8},\cite{r9}).

Let $\mathfrak{g}=Lie(G)$ be the adjoint representation of $G$ and 
$\mathfrak{h}=Lie(T)$.
Let $\alpha_0$ be the highest long root. 

Since $G$ is simple, the adjoint representation $\mathfrak{g}$ of $G$ is an 
irreducible representation with highest weight $\alpha_{0}$.

Let $\phi_{1}:\mathfrak{g} \rightarrow \mathfrak{h}$ be the $T$-invariant projection. Then, 
$\phi_1$ induces a natural isomorphism $Hom(\mathfrak h, \mathbb C) \rightarrow Hom(\mathfrak g, \mathbb C)^T$. 

Since $H^0(G/B, \mathcal L _{\alpha_0})$ is an irreducible self dual $G$ module with highest weight $\alpha_0$, the $G$ modules $H^0(G/B, \mathcal L_{\alpha_0})$, 
$Hom(\mathfrak g, \mathbb C)$ are isomorphic. 

So, we have  an isomorphism $Hom(\mathfrak h, \mathbb C)\rightarrow H^0(G/B, \mathcal L_{\alpha_0})^T$ .

{\it Thus we have a homomorphism $f: \mathbb C[\mathfrak{h}] \longrightarrow \oplus_{d \in \mathbb Z_{\geq 0}}H^0(G/B,\mathcal L_{\alpha_0}^{\otimes d})^T$ of $\mathbb C$ algebras. (1)}

\section{Relationship between $\mathbb C [\mathfrak h]$ and homogeneous co-ordinate ring of $G/B$ associated to highest long root }

In this section,
{\it we  show that the homomorphism $f: \mathbb C[\mathfrak{h}] \longrightarrow \oplus_{d \in \mathbb Z_{\geq 0}}H^0(G/B,\mathcal L_{\alpha_0}^{\otimes d})^T$ as in (1) is injective}.

Further, we also prove that $f: \mathbb C[\mathfrak{h}] \rightarrow  \oplus_{d \in \mathbb Z_{\geq 0}}H^0(G/B,\mathcal L_{\alpha_0}^{\otimes d})^T$ is an isomorphism if and only if $X(w)_{T}^{ss}(\mathcal L_{\alpha_0})$ is non empty for some coxeter element $w$ in $W$.

We frist set up some notation.

For each positive root $\alpha\in R^{+}$, we denote by $U_{-\alpha}$,
the $T$- stable root subgroup of $B$ corresponding to $-\alpha$.
 
Let $U^{-}$ be the unipotet radical of the opposite Borel subgroup 
$w_{0}Bw_{0}^{-1}$. 

Then, we have  $U^-= \prod_{\alpha \in \Phi^+}U_{-\alpha}$.
 
Let $v^+=E_{\alpha_0}$ be a highest weight vector of $\mathfrak{g}$. 
Consider $U^-v^+ \subset \mathfrak{g}$ be the $U^-$orbit of $v^+$.  

\begin{lemma} The restriction map $\phi :=\phi_{1}|_{U^-v^+}:U^-v^+ \rightarrow \mathfrak{h}$ is onto.
\end{lemma}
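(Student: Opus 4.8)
The plan is to show that $\phi$ surjects onto $\mathfrak{h}$ by exhibiting, for each simple root $\alpha_i$, an explicit element of $U^-v^+$ whose $\mathfrak{h}$-component involves $H_{\alpha_i}$ nontrivially, and then arguing that these span $\mathfrak{h}$. Recall $v^+ = E_{\alpha_0}$ and $\alpha_0$ is the highest long root. First I would observe that for a root subgroup element $u_{-\alpha}(t) = \exp(t E_{-\alpha})$ with $\alpha \in R^+$, applying it to $E_{\alpha_0}$ and using the Chevalley commutation relations gives $u_{-\alpha}(t)\cdot E_{\alpha_0} = E_{\alpha_0} + t[E_{-\alpha}, E_{\alpha_0}] + \tfrac{t^2}{2}[E_{-\alpha},[E_{-\alpha},E_{\alpha_0}]] + \cdots$. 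The term $[E_{-\alpha}, E_{\alpha_0}]$ lands in $\mathfrak{h}$ precisely when $\alpha = \alpha_0$, in which case it is a nonzero multiple of $H_{\alpha_0}$ (this is the $\mathfrak{sl}_2$-triple attached to $\alpha_0$). So already $\phi(U^-v^+)$ contains $\mathbb{C}\,H_{\alpha_0} = \mathbb{C}\,\check{\alpha}_0$ after projecting.

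The main work is to produce the rest of $\mathfrak{h}$. The idea is to conjugate $v^+$ around by the Weyl group first, or equivalently to exploit that $U^- v^+$ is large enough. A cleaner approach: note that $\mathfrak{g} = \mathfrak{n}^- \oplus \mathfrak{h} \oplus \mathfrak{n}^+$ and the weight-zero space of $\mathfrak{g}$ is exactly $\mathfrak{h}$. The orbit map $U^- \to U^- v^+$, $u \mapsto u\cdot E_{\alpha_0}$, is a morphism of varieties, and its differential at the identity is $X \mapsto [X, E_{\alpha_0}]$ for $X \in \mathfrak{n}^-$, i.e. the map $\mathrm{ad}(E_{\alpha_0})\colon \mathfrak{n}^- \to \mathfrak{g}$. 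Since $\mathfrak{g}$ is simple, $\mathrm{ad}(E_{\alpha_0})$ restricted to $\mathfrak{n}^-$ surjects onto $\mathfrak{h} \oplus \mathfrak{n}^+$? This is not literally true weight-by-weight for every simple $\mathfrak{g}$, so instead I would argue dimension-wise plus the $T$-equivariance: composing with $\phi_1$, the tangent map at the identity is $X \mapsto \phi_1([X, E_{\alpha_0}])$, which on the weight-$(-\alpha_0)$ part $\mathbb{C} E_{-\alpha_0} \subset \mathfrak{n}^-$ already hits $\mathbb{C} H_{\alpha_0}$. The point is that $\phi_1 \circ \mathrm{ad}(E_{\alpha_0})$ sends $\mathfrak{n}^-$ onto $\mathfrak{h}$: one shows the image is a $W$-stable... no, rather, one shows directly that for a suitable sum $E_{-\beta_1} + \cdots + E_{-\beta_k}$ of negative root vectors with $\beta_j - \alpha_0$ never a root except when contributions land in $\mathfrak{h}$, the bracket with $E_{\alpha_0}$ fills up $\mathfrak{h}$; concretely, $[E_{-\alpha_0}, E_{\alpha_0}]$ already gives a nonzero vector of $\mathfrak{h}$, and since $\alpha_0$ is a long root, the $\mathbb{C}^\ast$-scaling together with the action of $\mathrm{ad}(\mathfrak{h})$ is not enough — so I would instead conjugate.

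Here is the cleanest route, and the one I expect to use. Let $N \subset W$ be lifts of simple reflections; since $X(w_0) = G/B$, the $U^-$-orbit $U^- \cdot \overline{E_{\alpha_0}}$ in $\mathbb{P}(\mathfrak{g})$ is dense in the closed $G$-orbit (the orbit of the highest weight line), and that closed orbit is $G/P_{\alpha_0}$ where $P_{\alpha_0}$ is the maximal parabolic attached to $\alpha_0$. Now the key claim is that for each $i$, the simple coroot $\check{\alpha}_i$ is obtained as follows: the weight-zero vectors appearing in $U^- v^+$ are brackets $[\mathrm{ad}(E_{-\gamma_1})\cdots \mathrm{ad}(E_{-\gamma_m}) E_{\alpha_0}]$ with $\gamma_1 + \cdots + \gamma_m = \alpha_0$, i.e. running over all ways of writing $\alpha_0$ as an ordered sum of positive roots. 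Since $\alpha_0 = \sum c_i \alpha_i$ has all $c_i \geq 1$, in particular we can take the ordered sum $\alpha_0 = \alpha_i + (\alpha_0 - \alpha_i)$ whenever $\alpha_0 - \alpha_i$ is a root (which holds for every $i$ because $\alpha_0$ is the highest root, hence $\langle \alpha_0, \check\alpha_i\rangle \geq 0$ with equality forcing... one checks the small exceptions by hand), giving the weight-zero vector $[E_{-\alpha_i}, [E_{-(\alpha_0 - \alpha_i)}, E_{\alpha_0}]] \in \mathbb{C}^\ast H_{\alpha_i} + \cdots$. Collecting these over all $i$ shows $\phi(U^-v^+) \supseteq \mathrm{span}\{H_{\alpha_i}\} = \mathfrak{h}$. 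The main obstacle will be the bookkeeping of the Chevalley structure constants to guarantee the $H_{\alpha_i}$-coefficient is genuinely nonzero (and handling the handful of cases, e.g. when $\alpha_0 - \alpha_i$ fails to be a root — this happens only in type $A_n$ at the two end nodes and type $C_n$, which must be treated separately, in type $A_n$ using $\alpha_0 = \alpha_1 + \cdots + \alpha_n$ and taking iterated brackets), so I would isolate that as a lemma on structure constants or invoke the explicit description of $\mathfrak{sl}_2$-triples; once that is in hand, surjectivity of $\phi$ follows, since $\phi$ is a morphism whose image is a constructible set containing a spanning set of the vector space $\mathfrak{h}$ — more precisely, the image is closed under the $\mathbb{C}^\ast$-action coming from $T$ and contains each line $\mathbb{C} H_{\alpha_i}$, and a routine argument (or a dimension count using that $\dim U^- v^+ = \dim \mathfrak{g}/\mathfrak{p}_{\alpha_0} + \mathrm{rk}(G)$ after projecting) upgrades this to all of $\mathfrak{h}$.
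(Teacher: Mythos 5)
Your central step fails. You propose to realize $H_{\alpha_i}$ for every simple root $\alpha_i$ via the weight-zero bracket $[E_{-\alpha_i},[E_{-(\alpha_0-\alpha_i)},E_{\alpha_0}]]$, claiming that $\alpha_0-\alpha_i$ is a root for every $i$ with only a handful of exceptions. The situation is exactly the opposite: since $\alpha_0+\alpha_i$ is never a root, the $\alpha_i$-string through $\alpha_0$ shows that $\alpha_0-\alpha_i$ is a root if and only if $\langle\alpha_0,\check\alpha_i\rangle\geq 1$, and writing $\alpha_0=\sum_i m_i\varpi_i$ one has $m_i\geq 1$ for only one or two values of $i$ (two end nodes in type $A_n$; a single node in types $B_n$, $C_n$, $D_n$, $E_6$, $E_7$, $E_8$, $F_4$, $G_2$). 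So your construction produces $H_{\alpha_0}$ and at most two of the $H_{\alpha_i}$, nowhere near a spanning set of $\mathfrak h$, and the "small exceptions to be checked by hand" are in fact the generic case. The iterated-bracket remark you make for type $A_n$ is the germ of the correct fix, but you do not develop it. The paper's proof does precisely this in all types: it chooses distinct simple roots $\gamma_1,\dots,\gamma_{n-1}$ with $\langle\alpha_0,\check\gamma_1\rangle\geq 1$ and each partial sum $\theta_r=\gamma_1+\cdots+\gamma_r$ a root; then $\langle\alpha_0,\check\theta_r\rangle\geq 1$ forces $\beta_r:=\alpha_0-\theta_r$ to be a root, and the $n$ roots $\alpha_0,\beta_1,\dots,\beta_{n-1}$ are linearly independent (because $\alpha_0$ has full support while the $\gamma_j$ omit one simple root), so $H_{\alpha_0},H_{\beta_1},\dots,H_{\beta_{n-1}}$ span $\mathfrak h$ — note it is linear independence of these $n$ roots, not simplicity of coroots, that does the work.

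There is a second gap at the end: even if you had a spanning set of lines $\mathbb C H_{\alpha_i}$ inside $\phi(U^-v^+)$, the image of a morphism is only a constructible set, and a constructible set containing a spanning family of lines need not be the whole vector space (the union of the coordinate axes is already a counterexample), so your "routine argument" does not upgrade to surjectivity, and density would not suffice for the lemma as stated. The paper avoids this by computing the full weight-zero part of $u\cdot v^+$ for $u=\exp(c_0E_{-\alpha_0})\prod_r\exp(c_rE_{-\beta_r})\exp(c_r'E_{-\theta_r})$: a weight count using the linear independence of $\alpha_0,\beta_1,\dots,\beta_{n-1}$ shows the only contributing monomials give $\phi(u\cdot v^+)=-c_0H_{\alpha_0}-\sum_r c_rc_r'H_{\beta_r}$, a map that is visibly onto $\mathfrak h$. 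Your proposal needs both the correct choice of roots and this explicit no-interference computation to close.
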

\begin{proof}

Since $\alpha_0$ is dominant, we can choose a simple root $\gamma_1$ such 
that $\langle \alpha_0,\check{\gamma_1} \rangle \geq 1$. Choose distinct 
simple roots $\gamma_2, \gamma_3, \cdots ,\gamma_{n-1}$ such that for all 
$r=1,2 \cdots ,n-1$, $\sum_{j=1}^r\gamma_j$ is a root.

Denote $\theta_r=\sum_{j=1}^r \gamma_j$, $r=1,2, \cdots, n-1$. Again 
since $\langle \alpha_0, \check{\theta_r} \rangle \geq 1$ for 
$1 \leq r \leq n-1$, each $\beta_r:=\alpha_0-\theta_r$ is a root. 

For every choices of $c_0, c_r,c_r' \in \mathbb C$, $1 \leq r \leq n-1$ we claim 
that \\
 $\phi(exp(c_0E_{-\alpha_0}))(exp(c_1E_{-\beta_1}))(exp(c_1'E_{-\theta_1}))(exp(c_2E_{-\beta_2}))(exp(c_2'E_{-\theta_2})) 
\cdots (exp(c_{n-1}E_{-\beta_{n-1}}))$\\
$(exp(c_{n-1}'E_{-\theta_{n-1}}))(E_{\alpha_0})=-c_0H_{\alpha_0}-\sum_{r=1}^{n-1}c_rc_r'H_{\beta_r}.$

Take a typical monomial 
$M=\frac{c_0^{m_0}}{m_0!}E_{-\alpha_0}^{m_0}\frac{c_1^{a_1}}{a_1!}E_{-\beta_1}^{a_1}\cdots \frac{c_{n-1}^{a_{n-1}}}{a_{n-1}!}E_{-\beta_{n-1}}^{a_{n-1}}
\frac{(c'_1)^{b_1}}{b_1!}E_{-\theta_1}^{b_1}\cdots \frac{(c'_{n-1})^{b_{n-1}}}{b_{n-1}!}E_{-\theta_{n-1}}^{b_{n-1}}$ \\
occuring in the expansion of $(exp(c_0E_{-\alpha_0}))(exp(c_1E_{-\beta_1}))(exp(c_1'E_{-\theta_1}))(exp(c_2E_{-\beta_2}))
(exp(c_2'E_{-\theta_2})) \cdots$\\$
(exp(c_{n-1}E_{-\beta_{n-1}})(exp(c_{n-1}'E_{-\theta_{n-1}})$

Then $Mv^+$ has weight zero if and only if $(1-m_0)\alpha_0=\sum_{j=1}^{n-1}a_j\beta_j+\sum_{k=1}^{n-1}b_k\theta_k$.

\underline{Claim:} For all $j=1,2, \cdots ,n-1$ and $k=1,2, \cdots ,n-1$, 
there exist unique $r$ in $\{1,2, \cdots ,n-1\}$ such that $a_r=b_r=1$ and $a_j=b_j=0$ for all $j \neq r$. 

Now $Mv^+$ has weight $0$ implies $(1-m_0)\alpha_0-\sum_{j=1}^{n-1}a_j\beta_j-\sum_{k=1}^{n-1}b_k\theta_k=0$. \\
$\Rightarrow (m_0-1)\alpha_0+\sum_{j=1}^{n-1}a_j\beta_j+\sum_{k=1}^{n-1}b_k\theta_k=0$ \\
$\Rightarrow(m_0-1)\alpha_0+\sum_{j=1}^{n-1}a_j\beta_j+\sum_{k=1}^{n-1}b_k(\alpha_0-\beta_k)=0$ \\
$\Rightarrow(m_0-1)\alpha_0+(\sum_{k=1}^{n-1}b_k)\alpha_0+\sum_{j=1}^{n-1}a_j\beta_j-\sum_{k=1}^{n-1}b_k\beta_k=0$. \\
$\Rightarrow((m_0-1)+\sum_{k=1}^{n-1}b_k)\alpha_0+\sum_{j=1}^{n-1}(a_j-b_j)\beta_j=0$.

Since $\{ \alpha_0 , \beta_j, j=1,2,\cdots,n-1\}$ linearly independent, we have \\
$(m_0-1)+\sum_{k=1}^{n-1}b_k=0$ and $a_j=b_j$ for all $j=1,2, \cdots ,n-1$.

Since $m_0$ and $b_i's$ are non-negative integers, we have either $m_0=1$ and 
$a_j=b_j$ for all $j$ or $m_0=0$ and there exist unique $k$ such that $b_k=1$ and $b_j=0$ for all $j \neq k$. \\
Again $H_{\alpha_0}, H_{\beta_r}$ are linearly independent since 
$\alpha_0$ and $\beta_r$ are linearly independent.\\
So, we have a surjective map $ U^-v^+ \supseteq \mathbb (U_{-\alpha_0}\prod_{j=1}^{n-1}U_{-\beta_j}\prod_{k=1}^{n-1}U_{-\theta_k})v^+ \rightarrow  \mathfrak{h}$ 
given by\\ $(u_{-\alpha_0}(c_0)\prod_{j=1}^{n-1}u_{-\beta_j}(c_j)\prod_{k=1}^{n-1}u_{-\theta_k}(c_k'))v^+\mapsto -c_0H_{\alpha_0}-\sum_{r=1}^{n-1}c_rc_r'H_{\beta_r}$, \\
where $u_{\alpha}(c)=exp(cE_{\alpha})$, $\alpha \in R ,c \in \mathbb C$.\\
Hence $\phi: U^-v^+ \rightarrow \mathfrak{h}$ is onto. This completes the proof of the lemma.
\end{proof}

We have 

\begin{corollary}
The homomorphism $f: \mathbb C[\mathfrak{h}] \rightarrow  \oplus_{d \in \mathbb Z_{\geq 0}}H^0(G/B,\mathcal L_{\alpha_0}^{\otimes d})^T$ as in (1) is injective. 
\end{corollary}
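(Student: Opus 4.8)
The plan is to deduce injectivity of $f$ directly from the surjectivity established in Lemma 3.1. The key observation is that $f$ is a map of graded $\mathbb{C}$-algebras where $\mathbb{C}[\mathfrak{h}]$ is a domain, so it suffices to show that the induced morphism of affine schemes is dominant, or equivalently that the kernel is trivial. I would first recall that $H^0(G/B,\mathcal{L}_{\alpha_0})^T \cong \mathrm{Hom}(\mathfrak{h},\mathbb{C})$ and that, under the identification $H^0(G/B,\mathcal{L}_{\alpha_0}) \cong \mathrm{Hom}(\mathfrak{g},\mathbb{C})$, the restriction of sections to the open cell $U^- B/B \cong U^-$ corresponds (via the highest weight vector $v^+ = E_{\alpha_0}$ and the orbit map $U^- \to U^-v^+ \subset \mathfrak{g}$) to pulling back linear functions on $\mathfrak{g}$ along $u \mapsto u\cdot v^+$. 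Concretely, the composite $\mathbb{C}[\mathfrak{h}] \xrightarrow{f} \bigoplus_d H^0(G/B,\mathcal{L}_{\alpha_0}^{\otimes d})^T \to \mathbb{C}[U^- v^+]$ (the last map being restriction of sections, using that $\mathcal{L}_{\alpha_0}$ is trivialized on the big cell and the $d$-th graded piece restricts to degree-$d$ functions) is exactly the algebra map $\phi^\#: \mathbb{C}[\mathfrak{h}] \to \mathbb{C}[U^- v^+]$ induced by $\phi = \phi_1|_{U^- v^+}$.

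The main step, then, is: since $\phi: U^- v^+ \to \mathfrak{h}$ is surjective by Lemma 3.1, the comorphism $\phi^\#: \mathbb{C}[\mathfrak{h}] \to \mathbb{C}[U^- v^+]$ is injective (a surjective morphism of varieties has injective comorphism, because a nonzero regular function on $\mathfrak{h}$ cannot vanish on the image $\phi(U^- v^+) = \mathfrak{h}$). Therefore $f$ composed with restriction to the big cell is injective, hence $f$ itself is injective. I would phrase this carefully: restriction of a section of $\mathcal{L}_{\alpha_0}^{\otimes d}$ to the dense open subset $U^- B/B$ is injective (as $G/B$ is irreducible and the big cell is dense and $\mathcal{L}_{\alpha_0}^{\otimes d}$ is a line bundle, a section vanishing on a dense open set is zero), so the restriction map on the whole graded ring $\bigoplus_d H^0(G/B,\mathcal{L}_{\alpha_0}^{\otimes d})^T \to \mathbb{C}[U^- B/B] \cong \mathbb{C}[U^-]$ is injective; and we have just checked that the further composite with the orbit-map identification sends $\mathbb{C}[\mathfrak{h}]$ injectively.

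The one point requiring care — and the place I expect a referee would want more detail — is the identification of $f$ restricted to the big cell with $\phi^\#$. One must check that the chosen isomorphism $\mathrm{Hom}(\mathfrak{h},\mathbb{C}) \to H^0(G/B,\mathcal{L}_{\alpha_0})^T$ (coming from $\mathfrak{g} \cong H^0(G/B,\mathcal{L}_{\alpha_0})^*$ as self-dual $G$-modules, together with the $T$-projection $\phi_1$) is compatible with evaluation at points of the big cell in the way claimed, i.e. that evaluating a $T$-invariant section at $u^- \cdot B \in U^- B/B$ amounts to pairing the corresponding element of $\mathrm{Hom}(\mathfrak{g},\mathbb{C})^T$ against $u^- \cdot v^+ \in \mathfrak{g}$, up to a nonzero scalar depending only on $u^-$ through the trivialization. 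This is where the explicit construction of $\mathcal{L}_\chi$ via $G \times \mathbb{C}/\sim$ and the standard description of sections as functions $G \to \mathbb{C}$ with $\varphi(gb) = \chi(b)^{-1}\varphi(g)$ is used; the highest weight line in $H^0(G/B,\mathcal{L}_{\alpha_0})^* \cong \mathfrak{g}$ is spanned by $v^+ = E_{\alpha_0}$, and the orbit $U^- v^+$ is precisely the affine cone picture of the big cell inside the projective embedding by $\mathcal{L}_{\alpha_0}$. Once this dictionary is in place the rest is the formal surjective-implies-injective-comorphism argument, and the proof closes.

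\begin{proof}
By Lemma 3.1 the morphism $\phi: U^-v^+ \rightarrow \mathfrak{h}$ is surjective, hence the induced $\mathbb C$-algebra homomorphism $\phi^{\#}: \mathbb C[\mathfrak{h}] \rightarrow \mathbb C[U^-v^+]$ is injective: a regular function on $\mathfrak{h}$ whose pull-back to $U^-v^+$ is zero vanishes on $\phi(U^-v^+)=\mathfrak{h}$, so it is zero.

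Now consider the big cell $U^-B/B \subseteq G/B$, which is dense and open, and on which $\mathcal L_{\alpha_0}$ is trivial via the section through the base point. Restriction of sections gives a $\mathbb C$-algebra homomorphism $\mathrm{res}: \oplus_{d\in \mathbb Z_{\geq 0}}H^0(G/B,\mathcal L_{\alpha_0}^{\otimes d})^T \rightarrow \mathbb C[U^-B/B] \cong \mathbb C[U^-]$. Since $G/B$ is irreducible and $U^-B/B$ is dense, a section of $\mathcal L_{\alpha_0}^{\otimes d}$ vanishing on $U^-B/B$ vanishes identically; hence $\mathrm{res}$ is injective.

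Under the identification of $H^0(G/B,\mathcal L_{\alpha_0})$ with $\mathrm{Hom}(\mathfrak g,\mathbb C)$ as self dual $G$-modules, and the description of sections of $\mathcal L_{\alpha_0}$ as functions on $G$ transforming by $\alpha_0$, evaluation of a $T$-invariant section at the point $u^-B \in U^-B/B$ corresponds, up to the fixed trivialization, to pairing the associated element of $\mathrm{Hom}(\mathfrak g,\mathbb C)^T$ with $u^-\cdot v^+ \in U^-v^+$, where $v^+=E_{\alpha_0}$ spans the highest weight line. Hence the composite $\mathrm{res}\circ f: \mathbb C[\mathfrak h] \rightarrow \mathbb C[U^-] $ is, via the orbit map $u^- \mapsto u^-\cdot v^+$ identifying $\mathbb C[U^-v^+]$ with a subalgebra of $\mathbb C[U^-]$, precisely $\phi^{\#}$ followed by this inclusion. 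Since $\phi^{\#}$ is injective and $\mathrm{res}$ is injective, $f$ is injective. This completes the proof.
\end{proof}
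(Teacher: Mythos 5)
Your proposal is correct and follows essentially the same route as the paper: both deduce injectivity of $\phi^{*}$ from the surjectivity of $\phi$ in Lemma 3.1, and both factor $\phi^{*}$ as (restriction to the big cell) $\circ f$ to conclude that $f$ is injective. You supply somewhat more detail on the identification of the restricted sections with pullbacks along the orbit map, which the paper only records as a commutative triangle $g\circ f=\phi^{*}$.
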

\begin{proof}
By the lemma 3.1,  we have $\phi: U^-v^+ \rightarrow \mathfrak{h}$ is onto \\
So, $ \phi^*:\mathbb C[\mathfrak{h}] \rightarrow \mathbb C[U^-v^+]$ is injective.  \\
Since $U^-[v^+]$ is affine open subset of $G/B$ , we have restriction map $H^0($G/B$,\mathcal L_{\alpha_0} ^{\otimes d}) \longrightarrow \mathbb C [U^-v^+]$ 
 for all $d \in \mathbb Z_{\geq 0}$.\\
 So we get a map $H^0($G/B$,\mathcal L_\chi^{\otimes d})^T \longrightarrow \mathbb C [U^-v^+]$ 
 for all $d \in \mathbb Z_{\geq 0}$\\
 Hence we have a homomorphism $g: \oplus_{d \in \mathbb Z_{\geq 0}}H^0(G/B, \mathcal L_{\alpha_0}^{\otimes d})^T \longrightarrow \mathbb C [U^-v^+]$ of $\mathbb C$ algebras.
 
Now we have the following commutative diagram  

\[
\xymatrix{ 
\mathbb C[\mathfrak{h}] 
\ar[rr]^{f} 
\ar[dr]_{\phi^*} 
&& \oplus_{d \in \mathbb Z_{\geq 0}}H^0(G/B,\mathcal L_{\alpha_0}^{\otimes d})^T\ar[dl]^g \\
& \mathbb C[U^-v^+] }\\
\]
So, we have $g\circ f= \phi^*$.\\
 Hence the homomorphism $f:\mathbb C[\mathfrak{h}] \rightarrow  \oplus_{d \in \mathbb Z_{\geq 0}}H^0(G/B,\mathcal L_{\alpha_0}^{\otimes d})^T$ is injective. 
\end{proof}

We now prove the following theorem.

\begin{theorem} The homomorphism $f: \mathbb C[\mathfrak{h}] \rightarrow  \oplus_{d \in \mathbb Z_{\geq 0}}H^0(G/B,\mathcal L_{\alpha_0}^{\otimes d})^T$ is an isomorphism if and only if $X(w)_{T}^{ss}(\mathcal L_{\alpha_0})$ is non empty for some coxeter element $w$ in $W$.
\end{theorem}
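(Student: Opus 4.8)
The plan is to prove both implications by combining the injectivity of $f$ (Corollary 3.2) with a dimension/GIT argument relating the image of $f$ to the semistable locus. The key observation is that $f$ is a graded, injective, $\mathbb{C}$-algebra homomorphism between two finitely generated graded domains, and $\mathbb{C}[\mathfrak{h}]$ is a polynomial ring of Krull dimension $n = \operatorname{rank}(G)$; hence $f$ is an isomorphism if and only if the target ring $R := \oplus_{d}H^0(G/B,\mathcal{L}_{\alpha_0}^{\otimes d})^T$ has Krull dimension $n$ and is integral over (equivalently, finite as a module over) the subring $f(\mathbb{C}[\mathfrak{h}])$, i.e. the inclusion induces a finite surjective morphism $\operatorname{Spec}(R)\to \mathfrak{h}$ which is then forced to be an isomorphism because $\mathfrak{h}$ is normal and the map is birational. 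So the real content is: $f$ is an isomorphism $\iff \dim R = n$, and $\dim R = n \iff X(w)^{ss}_T(\mathcal{L}_{\alpha_0})\neq\emptyset$ for some Coxeter element $w$.

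For the second equivalence I would argue as follows. The projective GIT quotient $(G/B)^{ss}_T(\mathcal{L}_{\alpha_0})/\!\!/T = \operatorname{Proj}(R)$ has dimension $\dim R - 1$, and $\operatorname{Proj}(R)$ is nonempty precisely when $(G/B)^{ss}_T(\mathcal{L}_{\alpha_0})\neq\emptyset$; moreover $\dim (G/B)^{ss}_T(\mathcal{L}_{\alpha_0})\!/\!\!/T = \dim(G/B)^{ss}_T(\mathcal{L}_{\alpha_0}) - n$ since a generic $T$-orbit in the semistable locus is $n$-dimensional (the stabilizer is finite because $\alpha_0$ spans the relevant weight space and $G$ is adjoint, so $T$ acts with finite kernel — this needs a short verification). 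Now the semistable locus of the full flag variety $G/B$ is covered by the semistable loci of Schubert varieties $X(w)$, and by the results cited from \cite{r5},\cite{r6} one knows that $X(w)^{ss}_T(\mathcal{L}_{\alpha_0})\neq\emptyset$ for some $w$ forces this to hold for some Coxeter element $w$ (this is essentially the classification input), and conversely a Coxeter element gives a Schubert variety $X(w)$ of dimension exactly $n$ whose semistable locus, if nonempty, is a single $n$-dimensional $T$-orbit, forcing $\dim (G/B)^{ss}_T(\mathcal{L}_{\alpha_0}) = n$ hence $\dim R = n+1$... wait — I must be careful: $R$ is the ring of invariants, so $\operatorname{Proj} R$ has dimension $\dim(G/B)^{ss}/\!\!/T$, while $\dim R = \dim(G/B)^{ss} - n + 1$ when semistable points exist, and I want $\dim R = n$, i.e. $\dim(G/B)^{ss}_T(\mathcal{L}_{\alpha_0}) = 2n - 1$. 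The correct bookkeeping is that $f$ embeds the $n$-dimensional $\mathbb{C}[\mathfrak{h}]$ into $R$, so $\dim R \geq n$ always; and $\dim R \leq n$ exactly when the semistable locus is small enough, which happens precisely when a Coxeter-element Schubert variety already captures it. I would pin down the precise dimension count using that $\mathcal{L}_{\alpha_0}$ restricted to $X(w)$ for $w$ Coxeter has a one-dimensional space of $T$-invariant sections in each degree when semistable, matching $\mathbb{C}[\mathfrak{h}]$ after taking all of $G/B$.

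Concretely, the steps in order: (1) show the generic $T$-stabilizer on $(G/B)^{ss}_T(\mathcal{L}_{\alpha_0})$ is finite, so $\dim R = \dim(G/B)^{ss}_T(\mathcal{L}_{\alpha_0}) - n + 1$ whenever semistable points exist, and $\dim R \geq n$ always by Corollary 3.2; (2) for the "if" direction, assume $X(w)^{ss}_T(\mathcal{L}_{\alpha_0})\neq\emptyset$ for a Coxeter element $w$; since $\ell(w) = n$, $X(w)$ has dimension $n$ and one computes that its $T$-semistable locus is a single free (up to finite group) $T$-orbit, and using that $H^0(G/B,\mathcal{L}_{\alpha_0}^{\otimes d})^T \hookrightarrow \mathbb{C}[U^-v^+]$ together with the surjectivity of $\phi$ from Lemma 3.1, deduce that $R$ has no "extra" dimensions beyond those of $\mathbb{C}[\mathfrak{h}]$, i.e. $g\circ f = \phi^*$ with $\phi^*$ having image exactly the integral closure, forcing $g$ to detect all of $R$ inside $\mathbb{C}[U^-v^+]$, whence $f$ is onto; (3) for the "only if" direction, if no Coxeter element works, use the classification-type result that the semistable locus of $G/B$ then has dimension $> 2n-1$ (it is not concentrated on any length-$n$ Schubert cell), so $\dim R > n$ and $f$ cannot be surjective.

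The main obstacle I expect is step (2): turning the geometric statement "$X(w)^{ss}\neq\emptyset$ for a Coxeter $w$" into the \emph{algebraic} surjectivity of $f$ — that is, showing that every $T$-invariant section of $\mathcal{L}_{\alpha_0}^{\otimes d}$ on $G/B$ is, after restriction to $U^-v^+$, a \emph{polynomial} function pulled back from $\mathfrak{h}$ via $\phi^*$, rather than merely a rational or a priori larger class of functions. This amounts to controlling the image of $g$ precisely, and I anticipate needing the explicit monomial analysis from Lemma 3.1 (the fact that the only weight-zero contributions come from the specific product $U_{-\alpha_0}\prod U_{-\beta_j}\prod U_{-\theta_k}$) to show that $\mathbb{C}[U^-v^+]^{T}$ — or the relevant subalgebra hit by $g$ — coincides with $\phi^*(\mathbb{C}[\mathfrak{h}])$ exactly when the Coxeter semistability holds. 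The GIT dimension count should be routine once the finiteness of generic stabilizers is established, and the "only if" direction should follow formally from the dimension inequality plus injectivity.
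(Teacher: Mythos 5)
Your outline correctly isolates the two halves of the argument (injectivity from Corollary 3.2 plus a Krull-dimension obstruction for the ``only if'' direction), and the ``only if'' half is essentially the paper's: when no Coxeter element admits semistable points one shows the relevant cone is too big, so $\dim R>n$ while $\dim\mathbb C[\mathfrak h]=n$, and the injection $f$ cannot be onto. The paper makes this concrete by invoking Theorem 4.2 of \cite{r6} --- Coxeter semistability for $\mathcal L_{\alpha_0}$ holds exactly in types $A_n$, $B_2$, $C_n$ --- and then checking case by case that $\dim(G/P)\geq 2n$ in all remaining types, where $P=P_J$ is the parabolic attached to $\alpha_0$. You gesture at ``the results cited from \cite{r5},\cite{r6}'' but never pin down which statement you are using, and your assertion that the semistable locus of $X(w)$ for a Coxeter element $w$ is a single $n$-dimensional $T$-orbit is neither justified nor needed.

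The genuine gap is in the ``if'' direction, and you flag it yourself: surjectivity of $f$. Your proposed mechanism --- $\dim R=n$ together with ``$R$ finite over $f(\mathbb C[\mathfrak h])$ and birational onto the normal variety $\mathfrak h$'' --- would indeed force $R=f(\mathbb C[\mathfrak h])$, but neither finiteness nor birationality is established anywhere in the sketch, and equality of Krull dimensions alone gives only that $R$ is algebraic over the image, not integral (for instance $\mathbb C[x^2,x^2y]\subset\mathbb C[x,y]$ is an inclusion of $2$-dimensional graded domains with $y$ not integral over the subring). The paper closes this gap by a completely explicit computation rather than by GIT: working on the big cell $U^-_P\subset G/P$, it lists all monomials of weight $d\alpha_0$ in $\mathbb C[U^-_P]$ and uses the linear independence of $\alpha_0,\beta_1,\dots,\beta_{n-1}$ to show that the space of such monomials is exactly $V_d\cong \mathrm{sym}^d(\mathfrak h)$, giving $H^0(G/P,\mathcal L_{\alpha_0}^{\otimes d})^T=\mathrm{sym}^d(\mathfrak h)$ degree by degree. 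The reason this works precisely in types $A_n$, $B_2$, $C_n$ is that there $\dim(G/P)=2n-1$, so the $2n-1$ roots $\alpha_0,\beta_j,\theta_k$ of Lemma 3.1 exhaust the coordinates of $U^-_P$ and the monomial analysis accounts for \emph{every} $T$-invariant section. Without this exhaustiveness (or some substitute for it) your step (2) does not go through: the restriction-to-$U^-v^+$ map $g$ by itself only reproves injectivity.
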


\begin{proof}
By the theorem in 4.2 in \cite{r6}, $X(w)_{T}^{ss}(\mathcal L_{\alpha_0})$ is non empty for some coxeter element $w$ if and only if
$G$ is of type $A_n, B_2$ and $C_n$.

Now we prove that  the homomorphism $f: \mathbb C[\mathfrak{h}] \rightarrow  \oplus_{d \in \mathbb Z_{\geq 0}}H^0(G/B,\mathcal L_{\alpha_0}^{\otimes d})^T$ is an isomorphism when $G$ is of type $A_n, B_2$ and $C_n$.

By the corollary 3.2, the graded homomorphism $f:\mathbb C[\mathfrak{h}] \rightarrow  \oplus_{d \in \mathbb Z_{\geq 0}}H^0(G/B,\mathcal L_{\alpha_0}^{\otimes d})^T$ is injective.  Hence we have $sym^d(\mathfrak{h}) \subset H^0(G/B , \mathcal L_{\alpha_0}^{\otimes d})^T$.\\
Let $\alpha_0=\sum m_i \varpi_i $. $J:=\{i \in \{ 1,2,\cdots n\};m_i\geq 1\}$.\\
Let $P=P_J$.
Let $U^-_P$ be unipotent radical of the  opposite parabolic subgroup of $P$ determind by $T$ and $B$.\\
Take line bundle $\mathcal L_{\alpha_0}^{\otimes d} $ on $G/P$ and restric to $U^-_P$.\\
Since $U^-_P$ is affine space, $\mathcal L_{\alpha_0}^{\otimes d} $ is trivial on $U^-_P$.\\
So, we have $H^0(U^-_P , \mathcal L_{\alpha_0}^{\otimes d})=\mathbb C [U^-_P]$ , regular fuctions on $U^-_P$.\\
So,  $H^0(G/P , \mathcal L_{\alpha_0}^{\otimes d})$ is a $T$ sub module of $\mathbb C [U^-_P]$. \\
Now considering weights, \\the weight zero in    $H^0(G/P , \mathcal L_{\alpha_0}^{\otimes d})$ corresponding to weight $d \alpha_0$ in $\mathbb C [U^-_P]$. \\
 $X^{a_0}_{-\alpha_0}X^{a_1}_{-\beta_1}X^{a_2}_{-\beta_2} \cdots X^{a_{n-1}}_{-\beta_{n-1}}X^{b_1}_{-\theta_1} \cdots X^{b_{n-1}}_{-\theta_{n-1}}$
 has weight $d \alpha_0$ in $ \mathbb C [U^-_P] $  if and only if \\ $a_0\alpha_0 +\sum_{j=1}^{n-1} a_j\beta_j + \sum_{k=1}^{n-1} b_k\theta_k -d\alpha_0=0 $\\
$\Rightarrow (a_0-d)\alpha_0+\sum_{j=1}^{n-1}a_j\beta_j+\sum_{k=1}^{n-1}b_k\theta_k=0$ \\
$\Rightarrow(a_0-d)\alpha_0+\sum_{j=1}^{n-1}a_j\beta_j+\sum_{k=1}^{n-1}b_k(\alpha_0-\beta_k)=0$ \\
$\Rightarrow(a_0-d)\alpha_0+(\sum_{k=1}^{n-1}b_k)\alpha_0+\sum_{j=1}^{n-1}a_j\beta_j-\sum_{k=1}^{n-1}b_k\beta_k=0$. \\
$\Rightarrow((a_0-d)+\sum_{k=1}^{n-1}b_k)\alpha_0+\sum_{j=1}^{n-1}(a_j-b_j)\beta_j=0$.

Since $\{ \alpha_0 , \beta_j, j=1,2,\cdots,n-1\}$ linearly independent,  we have 
 $a_j=b_j$ for all $j=1,2, \cdots ,n-1$.\\
Since $a_0$ and $b_i's$ are non-negative integers, we have either $a_0=d$ ,
$\sum_{k=1}^{n-1}b_k=0$ or $a_0=0$ and $\sum_{k=1}^{n-1}b_k=d$.\\
Let $V_d:=\{ X^{a_0}_{-\alpha_0}(X_{-\beta_1}X_{-\theta_1})^{a_1}
(X_{-\beta_2}X_{\theta_2})^{a_2} \cdots (X_{-\beta_{n-1}}X_{-\theta_{n-1}}) ^{a_{n-1}}
;\sum_{i=0}^{n-1} a_i=d\}$.\\
In type $A_n , B_2$ and $C_n$ , $dim(G/P)=2n-1$.\\
So, we can identify $H^0(G/P , \mathcal L_{\alpha_0}^{\otimes d})^T$ with $V_d$.   \\
And also we can  idetify  $V_d$ as a subset of $sym^d(h)$.\\
Therefore we have $H^0(G/P , \mathcal L_{\alpha_0}^{\otimes d})^T \subset sym^d(\mathfrak {h})$.\\
So, $H^0(G/P , \mathcal L_{\alpha_0}^{\otimes d})^T=sym^d(\mathfrak{h})$.\\
 Hence the homomorphism  $f: \mathbb C[\mathfrak{h}] \rightarrow  \oplus_{d \in \mathbb Z_{\geq 0}}H^0(G/P,\mathcal L_{\alpha_0}^{\otimes d})^T$ is an isomorphism.\\
Since $H^0(G/B , \mathcal L_{\alpha_0}^{\otimes d}) = H^0(G/P , \mathcal L_{\alpha_0}^{\otimes d})$ ,
the homomorphism $f: \mathbb C[\mathfrak{h}] \rightarrow  \oplus_{d \in \mathbb Z_{\geq 0}}H^0(G/B,\mathcal L_{\alpha_0}^{\otimes d})^T$ is an isomorphism. 
 \\
 
 {\it When $G$ is not of type $A_n,B_2$ or $C_n$, we prove that $dim(G/P)\geq 2n$.}
 
 \underline{Type $B_n, n\neq 2$ :}\\
 In this case, the highest long root $\alpha_0$ is $\varpi_2$.
 So we have $P=P_2$.\\
 The dimension of $U^-_{P_2}=\#\{\alpha \in R^+ / \alpha \geq \alpha_2\} = 4n-5$.\\
 Since $U^-_{P_2}$ affine open subset of $G/P$, $dim(G/P)=4n-5$.\\
 Hence we have $dim(G/P)\geq 2n$ for $n\neq 2$.
 
 \underline{Type $D_n$:}\\
 In this case, the highest long root $\alpha_0$ is $\varpi_2$.
 So we have $P=P_2$.\\
 The dimension of $U^-_{P_2}=\#\{\alpha \in R^+ / \alpha \geq \alpha_2\} = 4n-7$.\\
 Since $U^-_{P_2}$ affine open subset of $G/P$, $dim(G/P)=4n-7$.\\
 Hence we have $dim(G/P)\geq 2n$ for $n\geq 4$.
 
 \underline{Type $E_6$:}\\
 The highest long root $\alpha_0=\varpi_2$. Hence  we have $P=P_2$.\\
 The dimension of $U^-_{P_2}=\#\{\alpha \in R^+ / \alpha \geq \alpha_2\} = 21$.\\
 Then $dim(G/P)=21$.\\
 Hence we have $dim(G/P)>12$.
 
 \underline{Type $E_7$:}\\
 The highest long root $\alpha_0=\varpi_1$. Hence  we have $P=P_1$.\\
 The dimension of $U^-_{P_1}=\#\{\alpha \in R^+ / \alpha \geq \alpha_1\} = 33$.\\
 Then $dim(G/P)=33$.\\
 Hence we have $dim(G/P)>14$.
 
 \underline{Type $E_8$:}\\
 The highest long root $\alpha_0=\varpi_8$. Hence  we have $P=P_8$.\\
 The dimension of $U^-_{P_8}=\#\{\alpha \in R^+ / \alpha \geq \alpha_8\} =57 $.\\
 Then $dim(G/P)=57$.\\
 Hence we have $dim(G/P)>16$.
 
 \underline{Type $F_4$:}\\
 The highest long root $\alpha_0=\varpi_1$. Hence  we have $P=P_1$.\\
 The dimension of $U^-_{P_1}=\#\{\alpha \in R^+ / \alpha \geq \alpha_1\} \geq 8$.\\
 Hence we have $dim(G/P)>8$.
 
 \underline{Type $G_2$:}\\
 The highest long root $\alpha_0=\varpi_2$. Hence  we have $P=P_2$.\\
 The dimension of $U^-_{P_2}=\#\{\alpha \in R^+ / \alpha \geq \alpha_2\} = 5$.\\
 Hence we have $dim(G/P)>4$.
 
  Since the $dim(G/P)\geq 2n$, the krull dimension of $\oplus_{d \in \mathbb Z_{\geq 0}}H^0(G/P,\mathcal L_{\alpha_0}^{\otimes d})>2n$.\\
  Hence  $dim(\oplus_{d \in \mathbb Z_{\geq 0}}H^0(G/P,\mathcal L_{\alpha_0}^{\otimes d})^T)>n$.\\
Therefore the homomorphism $f: \mathbb C[\mathfrak{h}] \rightarrow  \oplus_{d \in \mathbb Z_{\geq 0}}H^0(G/B,\mathcal L_{\alpha_0}^{\otimes d})^T$ is not an isomorphism
 if $G$ is not of the type $A_n, B_2$ or $C_n$.
 
 This completes the proof of the theorem.
  \end{proof}

Let $\mathbb P(\mathfrak g)$ be the projective space corresponding to the affine space $\mathfrak g$.

\begin{corollary} $\mathbb P(\mathfrak g)//G \simeq (G/B(\mathcal L_{\alpha_0}))^{ss}//N_G(T)$, when $G$ is of type $A_n$, $B_2$ or $C_n$. \\
\end{corollary}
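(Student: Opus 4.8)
The plan is to identify both quotients with $\operatorname{Spec}$ of a common graded ring and then pass to $\operatorname{Proj}$. On the left-hand side, $\mathbb{P}(\mathfrak{g})//G$ is by definition $\operatorname{Proj}$ of the invariant ring $\bigl(\bigoplus_{d\geq 0}\operatorname{Sym}^d(\operatorname{Hom}(\mathfrak{g},\mathbb{C}))\bigr)^G = \bigoplus_{d\geq 0}\operatorname{Sym}^d(\operatorname{Hom}(\mathfrak{g},\mathbb{C}))^G$; since $\mathfrak{g}$ is the irreducible $G$-module with highest weight $\alpha_0$ and is self-dual, $\operatorname{Hom}(\mathfrak{g},\mathbb{C})\cong H^0(G/B,\mathcal{L}_{\alpha_0})$ as $G$-modules, and one checks that the degree-$d$ piece of the homogeneous coordinate ring of $G/B(\mathcal{L}_{\alpha_0})$ is exactly $H^0(G/B,\mathcal{L}_{\alpha_0}^{\otimes d})$ by the projective normality of $G/B$ in this embedding. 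On the right-hand side, the GIT quotient $(G/B(\mathcal{L}_{\alpha_0}))^{ss}//N_G(T)$ equals $\operatorname{Proj}$ of $\bigoplus_{d\geq 0}H^0(G/B,\mathcal{L}_{\alpha_0}^{\otimes d})^{N_G(T)}$. Since $T$ acts trivially on $G/B$ via the inner action but the relevant linearized action is through $N_G(T)/T=W$ acting on the $T$-invariants, and since $T$ is connected while $W$ is finite, we have $H^0(G/B,\mathcal{L}_{\alpha_0}^{\otimes d})^{N_G(T)} = \bigl(H^0(G/B,\mathcal{L}_{\alpha_0}^{\otimes d})^T\bigr)^W$.

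Next I would bring in Theorem 3.3. When $G$ is of type $A_n$, $B_2$, or $C_n$, there is a Coxeter element $w$ with $X(w)_T^{ss}(\mathcal{L}_{\alpha_0})\neq\emptyset$, so by Theorem 3.3 the homomorphism $f$ is an isomorphism, giving $\bigoplus_{d\geq 0}H^0(G/B,\mathcal{L}_{\alpha_0}^{\otimes d})^T \cong \mathbb{C}[\mathfrak{h}]$ as graded $\mathbb{C}$-algebras. Under this isomorphism the residual $W$-action on the left matches the standard $W$-action on $\mathbb{C}[\mathfrak{h}]$ coming from the reflection representation of $W$ on $\mathfrak{h}$ (this compatibility is forced by the construction of $f$ through the $T$-invariant projection $\phi_1:\mathfrak{g}\to\mathfrak{h}$, which is $N_G(T)$-equivariant). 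Hence $\bigl(\bigoplus_{d\geq 0}H^0(G/B,\mathcal{L}_{\alpha_0}^{\otimes d})^T\bigr)^W \cong \mathbb{C}[\mathfrak{h}]^W$.

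Finally I would invoke the classical fact — going back to Chevalley, quoted in the introduction via $\mathbb{C}[\mathfrak{g}]^G$ — that restriction to the Cartan induces $\mathbb{C}[\mathfrak{g}]^G \xrightarrow{\sim} \mathbb{C}[\mathfrak{h}]^W$. Combining the three identifications, both graded rings in sight are isomorphic to $\mathbb{C}[\mathfrak{h}]^W\cong\mathbb{C}[\mathfrak{g}]^G$, and taking $\operatorname{Proj}$ (which is compatible with graded isomorphisms, after checking the gradings match up to a common Veronese) yields $\mathbb{P}(\mathfrak{g})//G \simeq (G/B(\mathcal{L}_{\alpha_0}))^{ss}//N_G(T)$.

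The main obstacle I expect is the bookkeeping around the residual $W$-action and the grading conventions: one must verify carefully that the isomorphism $f$ of Theorem 3.3 is $W$-equivariant for the natural actions on both sides (so that taking $W$-invariants commutes with $f$), and that the grading on $\bigoplus_d H^0(G/B,\mathcal{L}_{\alpha_0}^{\otimes d})$ agrees with the grading on $\mathbb{C}[\mathfrak{g}]$ under which $\mathbb{P}(\mathfrak{g})$ is defined, possibly only after replacing both by a common Veronese subalgebra — which is harmless at the level of $\operatorname{Proj}$. The geometric inputs (projective normality of $G/B$ in the embedding by $\mathcal{L}_{\alpha_0}$, and $\operatorname{Proj}$ computing the GIT quotient) are standard and can be cited.
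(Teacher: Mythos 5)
Your proposal is correct and follows essentially the same route as the paper: combine the Chevalley restriction isomorphism $\mathbb C[\mathfrak g]^G \simeq \mathbb C[\mathfrak h]^W$ with the isomorphism $f$ of Theorem 3.3, pass to $W$-invariants (equivalently $N_G(T)$-invariants), and take $\mathrm{Proj}$. You are in fact more careful than the paper on the one point it glosses over, namely the $W$-equivariance of $f$ needed to make taking $W$-invariants legitimate.
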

\begin{proof} 
By the Chevalley restriction theorem we have \\
the restriction map $\mathbb C[\mathfrak g]^G \longrightarrow \mathbb C[\mathfrak h]^W$ is an isomorphism\\
So we have $\mathbb P(\mathfrak g)//G = \mathbb P(\mathfrak h)//W$\\
Since $G$ is of type $A_n$, $B_2$ or $C_n$, by the theorem 3.3 we have $\mathbb C[\mathfrak{h}] \simeq \oplus_{d \in \mathbb Z_{\geq 0}}H^0(G/B,\mathcal L_{\alpha_0}^{\otimes d})^T$ \\
then $\mathbb C[\mathfrak h]^W \simeq \oplus_{d \in \mathbb Z_{\geq 0}}H^0(G/B,\mathcal L_{\alpha_0}^{\otimes d})^{N_G(T)}$\\
Hence $\mathbb P(\mathfrak h)//G \simeq (G/B(\mathcal L_{\alpha_0}))^{ss}//N_G(T)$.\\
Therefore $\mathbb P(\mathfrak g)//G \simeq (G/B(\mathcal L_{\alpha_0}))^{ss}//N_G(T)$. 
\end{proof}

\section{A description of line bundles $\mathcal L_\chi$ on $G/B$ for which $\oplus_{d \in \mathbb Z_{\geq 0}}H^0(G/B, \mathcal L_\chi^{\otimes d})^T$ is a 
polynomial ring}

In this section, We prove that for any indecomposable dominant character$\chi$ of a maximal torus $T$ of a simple adjoint group $G$ such that there is a Coxeter element $w \in W$ for which $X(w)^{ss}_T(\mathcal L_\chi) \neq \emptyset$,  the graded algebra 
$\oplus_{d \in \mathbb Z_{\geq 0}}H^0(G/B, \mathcal L_\chi^{\otimes d})^T$ is a 
polynomial ring if and only if  $dim(H^0(G/B , \mathcal L_{\chi})^T) \leq$ rank of $G$.

Notation: we use additive notation for the group $X(T)$ of charecters of $T$.

$X(T)^+$ denotes the set of all dominant characters of $T$.

\begin{definition}
 A non trivial dominant character $\chi$ of $T$ is said to be decomposable if there is a pair of non trivial dominant characters $\chi_1$, $\chi_2$ of $T$ such that $\chi=\chi_1+\chi_2$. Otherwise we will call it indecomposable.
\end{definition}

$X(T)^{+}_i$ denotes the set of all indecomposable elements of $X(T)^+$.
\begin{lemma}
 Let $G$ be a simple adjoint group of type $A_{n-1}$. Let $\chi= \sum_{i=1}^{n-1} a_{i} \alpha_i$, 
where $a_i \in \mathbb N $ for each $i=1,2,...,n-1$  be an element of $X(T)_i^+$ such that $\langle \chi,\check{\alpha}_{n-1} \rangle = 0$. Suppose that $X(s_{n-1}...s_1)^{ss}_T(\mathcal L_\chi) \neq \emptyset $ then \\
(i) The coefficients $ a_i$ ,$  i=1,2,...,n-1 $  satisfy the following inequality :\\
\begin{center}
$a_1 \geqslant a_2 \geqslant a_3 \geqslant ....\geqslant a_{n-2}=2$ and $ a_{n-1} =1$.
\end{center}
(ii)  $\chi$ must be of the form $i\varpi_1+\varpi_{n-i}$ for some $2\leq i \leq n-1$.
\end{lemma}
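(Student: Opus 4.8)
The plan is to analyze the semistability condition $X(s_{n-1}\cdots s_1)^{ss}_T(\mathcal L_\chi) \neq \emptyset$ combinatorially, by translating it into a statement about the convex hull of a set of $T$-weights appearing in the Schubert variety $X(s_{n-1}\cdots s_1)$. Recall the standard criterion (from the references \cite{r8},\cite{r9} and as used in \cite{r5},\cite{r6}): for a dominant character $\chi$ in the root lattice, $X(w)^{ss}_T(\mathcal L_\chi)\neq\emptyset$ if and only if $\chi$ lies in the cone (indeed the convex hull, up to positive scaling) spanned by the weights $\{w'(\chi) : w' \leq w\}$, equivalently $0$ lies in the convex hull of the $T$-fixed-point weights of $X(w)$ twisted by $-\chi$. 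For the specific Coxeter element $w=s_{n-1}s_{n-2}\cdots s_1$ in type $A_{n-1}$, the set $\{w' : w'\leq w\}$ is explicitly enumerable — these are precisely the $s_j s_{j-1}\cdots s_1$ for $0\le j\le n-1$ together with their sub-words, and in the standard realization of the $A_{n-1}$ weight lattice inside $\mathbb{Z}^n/\langle e_1+\cdots+e_n\rangle$ one can write each $w'(\chi)$ down in coordinates. First I would set up these coordinates: write $\chi = \sum a_i\alpha_i$ and use $\alpha_i = e_i - e_{i+1}$, $\varpi_i = e_1+\cdots+e_i$ (mod the diagonal), so that the hypothesis $\langle\chi,\check\alpha_{n-1}\rangle = 0$ becomes $a_{n-2} = 2a_{n-1}$ (more precisely $-a_{n-2}+2a_{n-1}=0$, giving $a_{n-2}=2a_{n-1}$), and the condition that $\chi$ be a genuine dominant weight forces all the partial-difference inequalities $\langle\chi,\check\alpha_i\rangle = 2a_i - a_{i-1} - a_{i+1}\geq 0$.

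The core computation is step (i): I would extract, from the semistability criterion applied to $w = s_{n-1}\cdots s_1$, the chain of inequalities it imposes on the $a_i$. The point is that for this particular Coxeter element, the relevant extreme weights $w'(\chi)$ differ from $\chi$ by moving the "$e_n$ slot" up the chain, and the requirement that $0$ be in the appropriate convex hull pins down each successive coefficient. Combined with the dominance inequalities $2a_i\geq a_{i-1}+a_{i+1}$ (which already force $a_1\geq a_2\geq\cdots$ once we know the tail behaves correctly — a convexity/concavity argument on the sequence $(a_i)$), and with $\langle\chi,\check\alpha_{n-1}\rangle = 0 \Rightarrow a_{n-2} = 2a_{n-1}$, the indecomposability hypothesis $\chi\in X(T)^+_i$ is what rules out $a_{n-1}\geq 2$ (if $a_{n-1}\geq 2$ one can peel off a copy of a smaller dominant character, contradicting indecomposability) and similarly forces the chain to terminate exactly at $a_{n-2}=2$, $a_{n-1}=1$. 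So the logical skeleton is: semistability $+$ dominance give monotonicity $a_1\geq a_2\geq\cdots\geq a_{n-1}$ and the boundary relation; indecomposability then forces the specific values $a_{n-2}=2$, $a_{n-1}=1$ at the end of the chain.

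For step (ii), once (i) is in hand I would argue as follows. From $a_{n-2}=2$, $a_{n-1}=1$ and the concavity of the sequence $(a_i)$ (which follows from $2a_i\ge a_{i-1}+a_{i+1}$, i.e. the dominance conditions), together with indecomposability, the sequence $(a_1,\dots,a_{n-1})$ is forced to be of the very rigid shape $(i, i-1, i-2, \dots, 2, 1)$ possibly with an initial flat stretch $(i,i,\dots,i,i-1,i-2,\dots,1)$ — but indecomposability kills a nontrivial flat stretch at the top (it would let us split off a $\varpi_1$), so in fact $a_j = i - j + 1$ clipped at... let me instead say: one checks directly that the only indecomposable dominant $\chi$ with $a_{n-2}=2, a_{n-1}=1$ and the monotonicity/concavity constraints are exactly $\chi = i\varpi_1 + \varpi_{n-i}$ for $2\le i\le n-1$, by computing $\sum a_j\alpha_j$ for $i\varpi_1+\varpi_{n-i}$ in the $e_k$-coordinates and matching coefficients; this is the claimed normal form.

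The main obstacle I anticipate is step (i): correctly identifying which convex-hull / string inequalities the semistability of $X(s_{n-1}\cdots s_1)$ actually imposes, and disentangling them from the purely dominance-theoretic inequalities, so as to cleanly deduce both the monotone chain and the terminal values $2,1$. This requires a careful bookkeeping of the weights $w'(\chi)$ for $w'\le s_{n-1}\cdots s_1$ — a place where it is easy to lose a sign or an index — and then an equally careful use of indecomposability to convert "$a_{n-1}\ge 1$" into "$a_{n-1}=1$" and to exclude a long flat initial segment. The passage from (i) to (ii) should, by contrast, be a routine (if slightly tedious) coordinate computation once the rigid shape of $(a_i)$ has been established.
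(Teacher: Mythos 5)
Your overall strategy coincides with the paper's: the semistability hypothesis is used only through the condition $s_{n-1}\cdots s_1(\chi)\leq 0$, which together with dominance yields $a_1\geq a_2\geq\cdots\geq a_{n-1}$, and indecomposability is then used to force $a_{n-1}=1$. The problem is that your proposal defers exactly the step that carries all the content. The sentence ``if $a_{n-1}\geq 2$ one can peel off a copy of a smaller dominant character'' is the entire difficulty, not a remark: since $G$ is adjoint, $X(T)$ is the root lattice, so you must exhibit a decomposition $\chi=\chi_1+\chi_2$ with \emph{both} summands dominant \emph{and} in the root lattice (this is why, for instance, $\varpi_1+\varpi_{n-1}$ is indecomposable in $X(T)^+$ even though it visibly splits in the weight lattice). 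The paper's argument takes $i$ to be the largest index with $a_{n-i}=ia_{n-1}$ (well defined because $\langle\chi,\check{\alpha}_{n-1}\rangle=0$ gives $a_{n-2}=2a_{n-1}$, hence $i\geq 2$), writes $a_{n-(i+1)}=ia_{n-1}+c$ with $0\leq c\leq a_{n-1}-1$, and treats two cases: if $c=0$, a chain of dominance inequalities forces $\chi=a_{n-1}(i\varpi_1+\varpi_{n-i})$, which is decomposable because $i\varpi_1+\varpi_{n-i}$ lies in the root lattice; if $c>0$, one verifies via $\langle\chi,\check{\alpha}_1\rangle$ and $\langle\chi,\check{\alpha}_{n-i}\rangle$ that $\chi-(i\varpi_1+\varpi_{n-i})$ is a nonzero dominant root-lattice element. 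None of this bookkeeping, in particular the dominance of the remainder, appears in your sketch, and without it the claim $a_{n-1}=1$ is unproved.

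There is also a substantive error in your route to (ii). You assert that indecomposability ``kills a nontrivial flat stretch at the top'' because such a stretch ``would let us split off a $\varpi_1$.'' This is false on both counts: the correct normal forms $i\varpi_1+\varpi_{n-i}$ have coefficient sequence $(i,i,\dots,i,i-1,\dots,2,1)$ with a flat initial segment of length $n-i$, and $\varpi_1$ is not a character of the adjoint torus, so splitting it off is irrelevant to indecomposability in $X(T)^+$. Your retreat to ``one checks directly'' does not repair this. In the paper, (ii) requires no separate computation: it falls out of the same case analysis, with $\chi=i\varpi_1+\varpi_{n-i}$ where $i$ is the largest index satisfying $a_{n-i}=ia_{n-1}$.
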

\begin{proof} Since $X(s_{n-1}...s_1)^{ss}_T(\mathcal L_\chi) \neq \emptyset$, we have $s_{n-1}...s_1(\chi)\leq 0$. and also given $\chi$ dominant So, $a_i \geqslant a_{i+1}$ for each $i=1,2,...n-2$.\\
 Now we prove that $a_{n-1}=1$.\\
If $a_{n-1} \geqslant 2 $,  let $i$ be the largest  positive integer such that $a_{n-i}=i a_{n-1}$.\\ Since $2a_{n-1}-a_{n-2}=\langle \chi , \check{\alpha}_{n-1} \rangle = 0 $ , we must have $i \geqslant 2 $. So $a_{n-(i+1)} \neq (i+1)a_{n-1}$\\
$\Longrightarrow a_{n-(i+1)} = ia_{n-1}+c$ , where $0 \leq c \leq a_{n-1}-1$.\\
\underline {case 1}:  If $c = 0$. \\ since $\chi$ dominant and $\langle \chi,\check{\alpha}_{n-1} \rangle = 0$  we have  $a_{n-j}\leq ja_{n-1} ,j=1,2,\cdots n-1$ and also we have  $a_j\geq a_{j+1}$ for each $j=1,2,..n-2$. \\
\underline {claim}: $\chi$ must be of the form $ia_{n-1}(\sum_{j=1}^{n-i}\alpha_j)+ a_{n-1}(\sum_{j=n+1-i}^{n-1}(n-j)\alpha_j)$. \\
Since $c=0$ , $a_{n-(i+1)}=ia_{n-1}$.\\
Now we prove $a_{n-(i+2)}=ia_{n-1}$.\\
Since $\chi$ is dominant, $2a_{n-(i+1)}-a_{n-(i+2)}-a_{n-i} \geq 0$\\
$\Longrightarrow 2ia_{n-1}-a_{n-(i+2)}-ia_{n-i} \geq 0$ \\
$\Longrightarrow ia_{n-i} \geq a_{n-(i+2)}$.\\
and also we have $a_{n-(i+2)} \geq a_{n-(i+1)} = ia_{n-i}$\\
So, $a_{n-(i+2)} = ia_{n-i}$.\\
Similarly, we can prove $a_{n-j}=ia_{n-1}$ for $j=i+3, \cdots, n-1$.\\
Now we prove $a_{n-(i-1)}=(i-1)a_{n-1}$.\\
Since $\chi$ is dominant , $a_{n-(i-1)}+a_{n-3} \geq a_{n-2}+a_{n-i}=2a_{n-1}+ia_{n-1}=(i+2)a_{n-1}$.\\
Since $a_{n-j}\leq ja_{n-1}$ , $a_{n-(i-1)}+a_{n-3} \leq (i+2)a_{n-1}$. \\
So, we conclude that $a_{n-(i-1)}=(i-1)a_{n-1}$ and $a_{n-3}=3a_{n-1}$.\\
Similarly, we can prove that $a_{n-j}=ja_{n-1}$ for $j=i-2,i-3,\cdots , 4$.\\
$\chi$ is of the form $ia_{n-1}(\sum_{j=1}^{n-i}\alpha_j)+ a_{n-1}(\sum_{j=n+1-i}^{n-1}(n-j)\alpha_j)=a_{n-1}(i \varpi_1+\varpi_{n-i})$.\\
This forces that $\chi$ is decomposable, since $a_{n-1}\geq 2$. This is a contradiction to the indecomposability of $\chi$. \\
This proves that if $c=0$ then $a_{n-1}=1$ \\
\underline{case 2}: If $c>0$.\\
 $\langle \chi , \check{\alpha}_1 \rangle = 2a_1-a_2 \geq a_1 \geq a_{n-(i+1)}=ia_{n-1}+c$.\\
 Similarly, $\langle \chi, \check{\alpha}_{n-i} \rangle \geq 2ia_{n-1}-ia_{n-1}-c-(i-1)a_{n-1} =a_{n-1}-c \geq 1 $.

Thus, $\chi -(i\varpi_1+\varpi_{n-i})$ is still a non zero dominant weight which is in the root lattice. 
This is a contradiction to the indecomposability of $\chi$. \\
So, $c=0$ and $a_{n-1}=1$ and this proves (i).\\
(ii). Using above arguement we also see that $\chi$ is of the form $i\varpi_1+\varpi_{n-i}$ where $i$ is the largest positive integer such that $a_{n-i}=i a_{n-1}$.

\end{proof}
\begin{lemma}

 Let $G$ be a simple adjoint group of type $A_{n-1}$. Let $\chi=\sum_{i=1}^{n-1}a_i\alpha_i$ , where $a_i \in \mathbb N,i=1 \cdots,n-1$ be an element of $X(T)_i^+$ such that $\langle \chi , \check{\alpha}_1 \rangle = 0$.Suppose that $X(s_1\cdots s_{n-1})_T^{ss}(\mathcal L_\chi) \neq \emptyset$, then \\
(i) The coefficient $a_i , i=1 ,\cdots, n-1$ satisfy the following inequality:\\
$$1=a_1\leq a_2\leq \cdots \leq a_{n-1}$$.\\
(ii) $\chi$ must be of the form $\varpi_i+i\varpi_{n-1}$ for some $2\leq i \leq n-1$.
\end{lemma}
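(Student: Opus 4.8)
This lemma is the image of Lemma 4.3 under the nontrivial diagram automorphism of the root system of type $A_{n-1}$, so the plan is to deduce it from Lemma 4.3 by that symmetry rather than to redo the computation. First I would set up the automorphism: let $\sigma$ be the involution of the $A_{n-1}$ root system sending $\alpha_i$ to $\alpha_{n-i}$. Then $\sigma$ sends $\check{\alpha}_i$ to $\check{\alpha}_{n-i}$, $\varpi_i$ to $\varpi_{n-i}$ and $s_i$ to $s_{n-i}$, and it preserves the pairing $\langle\,,\rangle$. Since $G$ is adjoint, $\sigma$ lifts to an automorphism of $G$ normalising $T$ and stabilising $B$; it acts on $X(T)$ by the above rule, carries $X(T)^+$ to itself and fixes the trivial character, hence carries $X(T)^+_i$ to itself, because a decomposition $\chi=\chi_1+\chi_2$ goes to a decomposition $\sigma(\chi)=\sigma(\chi_1)+\sigma(\chi_2)$. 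On $W$ it carries the Coxeter element $s_1 s_2 \cdots s_{n-1}$ to the Coxeter element $s_{n-1}\cdots s_2 s_1$. On $G/B$ the induced isomorphism sends $X(w)$ to $X(\sigma(w))$ and $\mathcal{L}_\chi$ to $\mathcal{L}_{\sigma(\chi)}$, intertwining the $T$-action on the source with the $T$-action on the target twisted by the automorphism $\sigma|_T$ of $T$; hence $X(w)^{ss}_T(\mathcal{L}_\chi)\neq\emptyset$ if and only if $X(\sigma(w))^{ss}_T(\mathcal{L}_{\sigma(\chi)})\neq\emptyset$.

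Next, given $\chi=\sum_{i=1}^{n-1}a_i\alpha_i\in X(T)^+_i$ with $\langle\chi,\check{\alpha}_1\rangle=0$ and $X(s_1\cdots s_{n-1})^{ss}_T(\mathcal{L}_\chi)\neq\emptyset$, I would put $\chi':=\sigma(\chi)=\sum_i a'_i\alpha_i$, so that $a'_i=a_{n-i}$. By the previous paragraph $\chi'\in X(T)^+_i$, and since $\sigma(\check{\alpha}_1)=\check{\alpha}_{n-1}$ we get $\langle\chi',\check{\alpha}_{n-1}\rangle=\langle\chi,\check{\alpha}_1\rangle=0$; moreover $X(s_{n-1}\cdots s_1)^{ss}_T(\mathcal{L}_{\chi'})\neq\emptyset$. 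Thus $\chi'$ satisfies the hypotheses of Lemma 4.3, which then yields $a'_1\geq a'_2\geq\cdots\geq a'_{n-2}=2$, $a'_{n-1}=1$, and $\chi'=j\varpi_1+\varpi_{n-j}$ for some $2\leq j\leq n-1$. Rewriting in terms of $\chi$ via $a'_i=a_{n-i}$, the chain of inequalities becomes $1=a_1\leq a_2\leq\cdots\leq a_{n-1}$ (with $a_2=2$), which is (i); and applying $\sigma$ to $\chi'$, using $\sigma(\varpi_1)=\varpi_{n-1}$ and $\sigma(\varpi_{n-j})=\varpi_j$, we get $\chi=j\varpi_{n-1}+\varpi_j=\varpi_j+j\varpi_{n-1}$, which is (ii) with $i=j$.

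The only step that is not purely formal is the claim used in the first paragraph, that the diagram automorphism preserves nonemptiness of the semistable locus. The work there is to pin down that $\sigma$ genuinely lifts to an automorphism of the triple $(G,T,B)$ and acts compatibly on the $T$-linearised line bundles $\mathcal{L}_\chi$ on $G/B$; this is where adjointness of $G$ is used. Everything after that is bookkeeping. If one preferred to avoid the automorphism, the alternative would be to repeat the proof of Lemma 4.3 verbatim with $\alpha_1$ and $\alpha_{n-1}$, and the two Coxeter elements, interchanged, reproducing each step with the indices reflected; I would regard the symmetry argument as the cleaner packaging of that computation.
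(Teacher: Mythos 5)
Your argument is correct and is essentially the paper's own proof made precise: the paper's entire proof reads ``Similar to lemma 4.2,'' i.e.\ it relies on exactly the $\alpha_i\leftrightarrow\alpha_{n-i}$ symmetry that you formalise via the diagram automorphism (note that the source lemma you invoke is Lemma 4.2 in the paper's numbering, not 4.3). The one step you rightly single out --- that the automorphism preserves nonemptiness of the semistable locus --- can also be discharged purely combinatorially, since the criterion actually used in the paper is $w\chi\leq 0$, which is manifestly preserved because $\sigma$ permutes the simple roots.
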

\begin{proof}
 Similar to lemma 4.2.
\end{proof}

\begin{lemma}
 Let G be a simple adjoint group of type $A_{n-1}$. Let $\chi=\sum_{i=1}^{n-1}a_i\alpha_i$ , where $a_i \in \mathbb N,i=1 \cdots,n-1$ be an element of $X(T)_i^+$.
If $X(s_{i+1}...s_{n-1}s_i...s_1)_T^{ss}(\mathcal L_\chi)\neq \emptyset$ for some $2 \leq i \leq n-3$, then $\chi=\alpha_1+\cdots+\alpha_{n-1}$.
\end{lemma}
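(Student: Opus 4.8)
The plan is to use the semistability hypothesis $X(s_{i+1}\cdots s_{n-1}s_i\cdots s_1)_T^{ss}(\mathcal L_\chi)\neq\emptyset$ to extract, via the standard criterion, that the Weyl group element $w:=s_{i+1}\cdots s_{n-1}s_i\cdots s_1$ satisfies $w(\chi)\leq 0$ (this is the numerical criterion used throughout Sections 3 and 4: a $T$-fixed point $w^{-1}B/B$ lies in $X(w)$ and is semistable only if $w(\chi)$ is antidominant, equivalently $\langle w(\chi),\check\varpi_j\rangle\leq 0$ for all $j$, with at least the appropriate inequalities strict). Combined with the hypothesis that $\chi=\sum a_j\alpha_j$ is itself dominant and lies in the root lattice, this will pin down the coefficients. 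So the first step is to compute $w(\alpha_j)$, or rather the pairings $\langle w(\chi),\check\varpi_j\rangle$, explicitly in type $A_{n-1}$ for this particular Coxeter element, which is a product of two ``descending runs'' of simple reflections meeting at index $i$.

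The second step is the combinatorial heart: translating the inequalities $w(\chi)\leq 0$ together with dominance of $\chi$ into constraints on the $a_j$. I expect this to force $a_j$ to be both weakly increasing and weakly decreasing along overlapping ranges — for instance the $s_i\cdots s_1$ factor should, as in Lemma 4.2, push toward $a_1\geq a_2\geq\cdots$ on an initial segment, while the $s_{i+1}\cdots s_{n-1}$ factor pushes toward $a_{i+1}\leq a_{i+2}\leq\cdots\leq a_{n-1}$ on a terminal segment, and the interaction at the junction index $i,i+1$ (where the two runs are ``interleaved'' so that $s_i$ and $s_{i+1}$ both act) should force equalities that propagate. The goal is to show all $a_j$ are forced to equal a common value, and then indecomposability of $\chi$ (Definition 4.1) forces that common value to be $1$, giving $\chi=\alpha_1+\cdots+\alpha_{n-1}=\varpi_1+\varpi_{n-1}$, the highest root. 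The restriction $2\leq i\leq n-3$ is exactly what guarantees both runs are nonempty and genuinely overlap at two adjacent indices, so that the ``squeeze'' between the increasing and decreasing constraints is available at the junction; for $i=1$ or $i=n-1$ one is instead in the situation of Lemmas 4.2 and 4.3, where the answer is a larger family $i\varpi_1+\varpi_{n-i}$ or $\varpi_i+i\varpi_{n-1}$.

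Concretely I would argue as follows. From dominance, $a_{j-1}-2a_j+a_{j+1}\leq 0$ for all interior $j$ (with the conventions $a_0=a_n=0$), i.e. the sequence $(a_j)$ is concave; a nonzero nonnegative concave sequence vanishing at the endpoints $0$ and $n$ is ``unimodal'' with no interior zeros. Then I would feed in the $w(\chi)\leq 0$ inequalities to show the concave sequence must in fact be affine on each of the two runs and, using the overlap at $i,i+1$, that the two affine pieces have the same (zero) slope, hence $(a_j)$ is constant. Indecomposability then yields $a_j\equiv 1$.

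The main obstacle will be Step 2 — correctly bookkeeping how this specific non-Coxeter-looking but Coxeter-certified element acts, and verifying that the overlap at indices $i$ and $i+1$ really does supply enough inequalities to collapse the concave sequence to a constant rather than merely to a piecewise-affine ``tent'' function (which would occur, and is the correct answer, precisely in the boundary cases $i=1,n-1$). Getting the direction of each inequality right at the junction, and confirming that for $2\leq i\leq n-3$ no ``tent'' survives, is where the real care is needed; the rest is the same antidominance-plus-concavity-plus-indecomposability mechanism already exploited in Lemmas 4.2 and 4.3, which I would invoke in parallel form rather than redo from scratch.
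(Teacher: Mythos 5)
Your proposal follows essentially the same route as the paper's proof: from $w(\chi)\leq 0$ one reads off $a_1\geq a_2\geq\cdots\geq a_{i+1}$ and $a_{i+1}\leq a_{i+2}\leq\cdots\leq a_{n-1}$, the dominance (concavity) inequality $2a_{i+1}\geq a_i+a_{i+2}$ at the junction then forces $a_i=a_{i+1}=a_{i+2}$, these equalities propagate outward by concavity, and indecomposability gives $a_j\equiv 1$. The explicit computation of $w(\chi)$ that you defer does produce exactly the two monotone runs you anticipate, so the plan is correct and coincides with the paper's argument.
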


\begin{proof} Since  $X(s_{i+1}...s_{n-1}s_i...s_1)_T^{ss}(\mathcal L_\chi)\neq \emptyset$ for some $2 \leq i \leq n-3$, $s_{i+1}...s_{n-1}s_i...s_1(\chi) \leq 0 $.\\ So, we have
 $\sum_{j=1}^i(a_{j+1}-a_j)\alpha_j+(a_{i+1}-a_1-a_{n-i})\alpha_{i+1}+\sum_{k=i+2}^{n-1}(a_{i+2}-a_{n-1})\alpha_k \leq 0$.\\  Since $\chi$ is dominant, we have $a_{i+1}\leq a_i \leq \cdots \leq a_2 \leq a_1$,
$ a_{i+1} \leq a_{i+2} \leq \cdots \leq a_{n-1}$
and  $2a_{i+1}-a_i-a_{i+1} \geq 0$ then $a_{i+1}=a_i=a_{i+2}$.\\
Similarly, we can prove that $a_1=a_2= \cdots = a_{n-1}$.\\
Therefore  $\chi =a_1(\alpha_1+\alpha_2+ \cdots + \alpha_{n-1})$.\\
Since $\chi$ is indecomposable and $a_i \in \mathbb N$, we have $a_1=1$.\\
Hence $\chi=\alpha_1+\alpha_2+ \cdots + \alpha_{n-1}$.

\end{proof}
\begin{lemma}
 Let $G$ be a simple adjoint group of type $A_{n-1}$. Let $\chi=i\varpi_1+\varpi_{n-i} \in X(T)_i^+$ for some $2\leq i \leq n-3
 $ then   $dim(H^0($G/B$,\mathcal L_\chi)^T) > n-1$.
\end{lemma}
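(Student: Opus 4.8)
The plan is to reinterpret $\dim H^0(G/B,\mathcal L_\chi)^T$ as the multiplicity of the zero weight in an irreducible module and then to bound that multiplicity from below. By Borel--Weil, $H^0(G/B,\mathcal L_\chi)$ is, in the conventions of Section~2, the irreducible $G$-module of highest weight $\chi$, and $H^0(G/B,\mathcal L_\chi)^T$ is exactly its zero weight space; since an irreducible highest weight module and its dual have zero weight spaces of equal dimension, $\dim H^0(G/B,\mathcal L_\chi)^T = \dim V(\chi)_0$, where $V(\chi)$ denotes the irreducible module of type $A_{n-1}$ with highest weight $\chi = i\varpi_1 + \varpi_{n-i}$. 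Thus it suffices to prove $\dim V(\chi)_0 > n-1$.

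First I would pass from $\mathfrak{sl}_n$ to $\mathfrak{gl}_n$. In the usual coordinates $e_1,\dots,e_n$ on the weight lattice of $\mathfrak{gl}_n$, with $\alpha_j = e_j - e_{j+1}$ and $\varpi_k = e_1 + \cdots + e_k$, one has $\chi = (i+1)e_1 + e_2 + \cdots + e_{n-i}$; that is, $\chi$ is the partition $\lambda = (i+1, 1^{n-i-1})$ of $n$. Hence $V(\chi)$ is the restriction to $\mathfrak{sl}_n$ of the polynomial $GL_n$-module $V_\lambda$, and since $V_\lambda$ is homogeneous of degree $|\lambda| = n$, the only weight of $V_\lambda$ restricting to the zero weight of $\mathfrak{sl}_n$ is $(1,1,\dots,1)$. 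Therefore $\dim V(\chi)_0$ equals the Kostka number $K_{\lambda,(1^n)}$, i.e.\ the number $f^\lambda$ of standard Young tableaux of the hook shape $\lambda$.

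Next I would evaluate $f^\lambda$ for the hook $\lambda = (i+1, 1^{n-i-1})$: such a tableau carries $1$ in the corner box and is then determined by the choice of which $i$ of the entries $2,\dots,n$ fill the arm (the rest of the first row), the remaining $n-i-1$ being placed in increasing order down the leg; hence $f^\lambda = {n-1 \choose i}$ (this also follows from the hook length formula). It then remains to note that the hypothesis $2 \le i \le n-3$ forces $n \ge 5$, while unimodality of the binomial coefficients gives ${n-1 \choose i} \ge \min\!\left( {n-1 \choose 2}, {n-1 \choose n-3} \right) = {n-1 \choose 2} = \frac{(n-1)(n-2)}{2}$; since $n \ge 5$ makes $\frac{n-2}{2} > 1$, we conclude $\dim H^0(G/B,\mathcal L_\chi)^T = {n-1 \choose i} \ge {n-1 \choose 2} > n-1$, which is the assertion of the lemma.

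The computation itself is short once the set-up is fixed; the step that needs care, and which I regard as the crux, is the identification $\dim H^0(G/B,\mathcal L_\chi)^T = f^\lambda$ --- in particular checking that no $GL_n$-weight of $V_\lambda$ other than $(1^n)$ restricts to the zero weight of $\mathfrak{sl}_n$ --- together with the count $f^{(i+1, 1^{n-i-1})} = {n-1 \choose i}$. As an alternative closer to the method of Section~3, one can instead produce ${n-1 \choose i} > n-1$ linearly independent weight-zero vectors directly, as images of appropriate monomials in the $E_{-\beta}$ applied to a highest weight vector, indexed by the hook tableaux above; this avoids Kostka numbers but reduces to the same enumeration.
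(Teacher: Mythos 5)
Your proof is correct and follows essentially the same route as the paper: the paper also identifies $\dim H^0(G/B,\mathcal L_\chi)^T$ with the number of tableaux of the hook shape attached to $i\varpi_1+\varpi_{n-i}$ in which each of $1,\dots,n$ occurs exactly once (phrased via $T$-invariant standard monomials rather than Kostka numbers), obtaining the same count $\binom{n-1}{i}$, and then verifies $\binom{n-1}{i}>n-1$ by an elementary inequality. Your unimodality argument for the last step and your explicit check that $(1^n)$ is the only $GL_n$-weight restricting to zero are clean substitutes for the paper's slightly terser versions of the same facts.
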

\begin{proof}
 Since $\chi= i\varpi_i+\varpi_{n-i}$, each integer in $\{1,2,\cdots,n\}$ occurs in the standard
 young tableau corresponding to $T$ - invariant standard monomial of shape $\chi$ (refer to \cite{r11} for standard monomial)
 exactly once.\\
 Hence  $dim(H^0($G/B$,\mathcal L_\chi)^T)= {n-1\choose i}$.\\
 Since $2\leq i \leq n-3$, $i < n-2$ and so $i-j < n-(j+2)$ for $j=1,2,\cdots i-2 $ .\\
 So we have $(n-2)(n-3)\cdots(n-i) > i !$.\\
 Hence ${n-1 \choose i} > n-1$.\\
 Therefore  $dim(H^0($G/B$,\mathcal L_\chi)^T) > n-1$.
 
\end{proof}

\begin{lemma}
 Let $G$ be a simple adjoint group of type $A_{n-1}$. Let $\chi=\varpi_i+i\varpi_{n-1} \in X(T)_i^+$ for some $2\leq i \leq n-3
 $ then   $dim(H^0($G/B$,\mathcal L_\chi)^T) > n-1$.
\end{lemma}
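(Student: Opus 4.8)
The plan is to deduce this lemma from Lemma 4.6 by a duality argument, so that no further standard-monomial bookkeeping is needed. Observe first that $\chi = \varpi_i + i\varpi_{n-1}$ and the weight $\chi' := i\varpi_1 + \varpi_{n-i}$ occurring in Lemma 4.6 are interchanged by $-w_0$: in type $A_{n-1}$ one has $-w_0(\varpi_j) = \varpi_{n-j}$ for every $j$ (equivalently, $-w_0$ is the diagram automorphism $\alpha_j \mapsto \alpha_{n-j}$, the same symmetry that exchanges Lemma 4.2 with Lemma 4.3 and Lemma 4.6 with the present lemma), whence $-w_0(\chi) = \varpi_{n-i} + i\varpi_1 = \chi'$. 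Since $-w_0$ is a lattice automorphism which permutes the dominant characters of $T$ and carries a decomposition $\mu = \mu_1 + \mu_2$ into nontrivial dominant characters to another such decomposition, it restricts to an involution of $X(T)_i^+$; in particular $\chi \in X(T)_i^+$ forces $\chi' \in X(T)_i^+$, and the range $2 \le i \le n-3$ is precisely the hypothesis of Lemma 4.6 applied to $\chi'$.

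Next I would invoke two standard facts, valid with the conventions of Section 2: first, for dominant $\lambda$ the $G$-module $H^0(G/B, \mathcal L_\lambda)$ is irreducible and $H^0(G/B, \mathcal L_\lambda)^{*} \cong H^0(G/B, \mathcal L_{-w_0\lambda})$ as $G$-modules, hence as $T$-modules; second, for any finite-dimensional $T$-module $V$ the weights of $V^{*}$ are the negatives of those of $V$, so the zero-weight spaces — that is, the spaces of $T$-invariants — of $V$ and $V^{*}$ have equal dimension. Combining these with the first paragraph yields
$$\dim H^0(G/B, \mathcal L_\chi)^T \;=\; \dim H^0(G/B, \mathcal L_{-w_0\chi})^T \;=\; \dim H^0(G/B, \mathcal L_{\chi'})^T .$$

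Finally, by Lemma 4.6 the right-hand side equals ${n-1 \choose i}$, and the inequality ${n-1 \choose i} > n-1$ for $2 \le i \le n-3$ is the estimate established in the proof of Lemma 4.6 (by unimodality of the binomial coefficients it suffices to check the two endpoints $i = 2$ and $i = n-3$, both of which give ${n-1 \choose 2} = (n-1)(n-2)/2 > n-1$, since then $n \ge 5$). Therefore $\dim H^0(G/B, \mathcal L_\chi)^T > n-1$, as desired.

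There is essentially no obstacle in this route; the only point requiring a little care is to state the Weyl-module duality $H^0(G/B,\mathcal L_\lambda)^{*} \cong H^0(G/B, \mathcal L_{-w_0\lambda})$ with the sign convention for $\mathcal L_\chi$ fixed in Section 2. A self-contained proof paralleling Lemma 4.6 is possible but messier: one would identify the Young diagram of $\chi = \varpi_i + i\varpi_{n-1}$, namely $\lambda = ((i+1)^{i},\, i^{\,n-1-i})$ with $in$ boxes, count the semistandard Young tableaux of shape $\lambda$ and content $(i, i, \dots, i)$ (the Kostka number $K_{\lambda,(i^{n})}$), and bound that quantity below by $n-1$. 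The duality argument sidesteps this combinatorial estimate, which is why I would prefer it.
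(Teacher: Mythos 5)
Your proposal is correct and is essentially identical to the paper's own proof: the authors also apply $-w_0$ to reduce to the weight $i\varpi_1+\varpi_{n-i}$ of the preceding lemma (Lemma 4.5 in the paper's numbering), using $H^0(G/B,\mathcal L_\chi)^*\cong H^0(G/B,\mathcal L_{-w_0\chi})$ and the equality of zero-weight-space dimensions for a module and its dual. Your additional checks (that $-w_0$ preserves indecomposability and the explicit verification of $\binom{n-1}{i}>n-1$) are fine but not needed beyond what the paper records.
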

\begin{proof}
 Let $w_0$ be  longest weyl group element. \\
 Since $\chi =\varpi_i+i\varpi_{n-1}\in X(T)_i^+, -w_0 \chi =  i\varpi_1+\varpi_{n-i}\in X(T)_i^+$.\\
 Since  $H^0($G/B$,\mathcal L_{\chi})^*=H^0($G/B$,\mathcal L_{-w_0\chi})$ , $dim(H^0($G/B$,\mathcal L_{\chi})^T)=dim(H^0($G/B$,\mathcal L_{-w_0\chi})^T)$.\\
 By the previous  lemma we have $dim(H^0($G/B$,\mathcal L_{-w_0\chi})^T) > n-1$.\\
 Hence  $dim(H^0($G/B$,\mathcal L_\chi)^T) > n-1$.
 
\end{proof}

\begin{lemma}
Let $G$ be a simple adjoint group of type $A_{n-1}$, $n \neq 4$.
 Let $\chi \in X(T)^+$. If $w \in W$ is a Coxeter element such that $X(w)_T^{ss}(\mathcal L_\chi) \neq \emptyset$
then $\{ i\in \{1,\cdots,n\}:l(ws_i)=l(w)-1\}\subseteq\{1,n-1\}$.
\end{lemma}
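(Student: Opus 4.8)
The plan is to translate $T$-semistability into linear inequalities on the weight and then analyse Coxeter elements combinatorially. Recall the semistability criterion already used in the proof of Lemma 4.2 (see \cite{r5},\cite{r6}): if $X(w)^{ss}_T(\mathcal L_\chi)\neq\emptyset$ then $w(\chi)\le 0$, i.e.\ $-w(\chi)$ is a non-negative combination of simple roots. Since $G$ is adjoint of type $A_{n-1}$, $X(T)$ is the root lattice, so a dominant $\chi$ is automatically $\ge 0$; I also assume $\chi\neq 0$, the case $\chi=0$ being trivial as $\mathcal L_0$ is the trivial bundle. Thus it suffices to prove: \emph{if $w$ is a Coxeter element of $W=S_n$ with $\ell(ws_p)=\ell(w)-1$ for some $p$ with $2\le p\le n-2$ and $n\neq 4$, then $w(\chi)\not\le 0$ for every non-zero dominant $\chi$.} Identifying $X(T)_{\mathbb R}$ with $\{x\in\mathbb R^n:\sum_i x_i=0\}$ so that $\alpha_i=e_i-e_{i+1}$, a weight $\chi$ is dominant iff $\chi_1\ge\cdots\ge\chi_n$, and $w(\chi)\le 0$ is equivalent to the $n-1$ inequalities $\sum_{k\in S_i}\chi_k\le 0$, $i=1,\dots,n-1$, where $S_i:=w^{-1}(\{1,\dots,i\})$.

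First I would describe $w$. Since $\ell(ws_p)=\ell(w)-1$, $w$ has a reduced word ending in $s_p$; deleting that letter gives a reduced word (using each remaining simple reflection once) of a Coxeter element of $W_{\{1,\dots,p-1\}}\times W_{\{p+1,\dots,n-1\}}$, so $w=c_1c_2s_p$ with $c_1$ a $p$-cycle on $\{1,\dots,p\}$ and $c_2$ an $(n-p)$-cycle on $\{p+1,\dots,n\}$. The diagram symmetry $(w,\chi)\mapsto(w_0ww_0,\,-w_0\chi)$ preserves both ``$\chi$ dominant'' and ``$w(\chi)\le 0$'' and carries the internal descent $p$ to $n-p$, so we may assume $p\le n-p$; as $n\ge 5$ this forces $n-p\ge 3$. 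A direct computation of $w=c_1c_2s_p$ on $\{1,\dots,n\}$ gives: $S_p=\{1,\dots,p-1,p+1\}$; $S_{n-1}=\{1,\dots,n\}\setminus\{w^{-1}(n)\}$ with $w^{-1}(n)\in\{p\}\cup\{p+2,\dots,n\}$; and the complement of $S_{p+1}$ equals $\{p\}$ together with exactly $n-p-2$ indices, all $\ge p+2$.

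From these three facts the inequalities collapse $\chi$. The $S_{n-1}$-inequality says $\chi_{w^{-1}(n)}\ge 0$, and since $w^{-1}(n)\ge p$ this yields $\chi_1\ge\cdots\ge\chi_p\ge 0$; a short argument with $S_1$ then gives $\chi_{p+1}\le 0$, hence $\chi_{p+1}\ge\cdots\ge\chi_n$ are all $\le 0$. The $S_p$-inequality gives $\chi_1+\cdots+\chi_{p-1}\le|\chi_{p+1}|$, whence $\chi_1\le|\chi_{p+1}|$ (all terms being $\ge 0$), and the $S_{p+1}$-inequality gives $\chi_p\ge(n-p-2)|\chi_{p+2}|\ge(n-p-2)|\chi_{p+1}|$. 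Chaining, $(n-p-2)|\chi_{p+1}|\le\chi_p\le\chi_1\le|\chi_{p+1}|$. If $n-p\ge 4$ this forces $\chi_{p+1}=0$, and then the $S_p$-inequality forces $\chi_1=\cdots=\chi_p=0$, and $\sum_i\chi_i=0$ forces the remaining coordinates to vanish, so $\chi=0$. If $n-p=3$ the chain is an equality, so $\chi_1=\cdots=\chi_p=t$ and $\chi_{p+1}=\chi_{p+2}=-t$ for some $t\ge 0$; the $S_p$-inequality now reads $(p-1)t\le t$, giving $t=0$ when $p\ge 3$, while for $p=2$ (so $n=5$) the relation $\sum_{i=1}^5\chi_i=0$ gives $\chi_5=0$ and hence $t=0$. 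In every case $\chi=0$, contradicting $\chi\neq 0$.

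I expect the main work to be the middle step: verifying, for \emph{arbitrary} Coxeter elements $c_1,c_2$, that the sets $S_i$ attached to $w=c_1c_2s_p$ have the claimed shape—especially that $S_{p+1}$ omits $p$ and omits all but one index $\ge p+2$—and then handling the borderline case $n-p=3$ (with its sub-case $p=2$, $n=5$), where the cone of admissible $\chi$ degenerates. That is precisely the case that genuinely survives for $n=4$: there $p=2$, $c_1=(1\,2)$, $c_2=(3\,4)$, $w=s_1s_3s_2$, and $\chi=2\varpi_2$ satisfies $w(\chi)\le 0$, which is why the hypothesis $n\neq 4$ is needed.
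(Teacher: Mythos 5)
Your argument is correct, but it follows a genuinely different route from the paper's. You work in the permutation model of $S_n$, decompose $w=c_1c_2s_p$ into commuting cycles, translate $w(\chi)\le 0$ into the partial-sum inequalities $\sum_{k\in w^{-1}(\{1,\dots,i\})}\chi_k\le 0$, and then squeeze $\chi$ to zero via the chain $(n-p-2)\lvert\chi_{p+1}\rvert\le\chi_p\le\chi_1\le\lvert\chi_{p+1}\rvert$ (I checked the descriptions of $S_p$, $S_{n-1}$, $S_{p+1}^c$ and the borderline cases $n-p=3$, $p=2$; they all hold, and the reduction $p\le n-p$ via $-w_0$ is legitimate). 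The paper instead argues locally at the descent: writing $\chi=\sum a_j\alpha_j$, it uses the fact that in a Coxeter element each simple reflection occurs exactly once, so the coefficient of $\alpha_i$ in $w\chi$ is computed entirely by the final letter $s_i$ and equals $a_{i-1}+a_{i+1}-a_i$; hence $w\chi\le 0$ gives $a_{i-1}+a_{i+1}\le a_i$, which combined with dominance at $i\pm 1$ yields $a_{i-2}+a_{i+2}\le 0$, and positivity of all coefficients of a nonzero dominant weight in the root lattice forces $i=2=n-2$, i.e.\ $n=4$. The paper's proof is a five-line computation that never needs the cycle structure of $w$ or the case analysis at $n-p=3$; your proof is longer and more case-heavy but entirely elementary, makes the exceptional weight $2\varpi_2$ in $A_3$ visible as the degenerate ray of the inequality system, and would adapt to describing the full cone of admissible $\chi$ rather than just ruling out internal descents. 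Both hinge on the same entry point, namely that nonempty $X(w)^{ss}_T(\mathcal L_\chi)$ forces $w(\chi)\le 0$, and both must implicitly exclude $\chi=0$, which you do explicitly and the paper does via the tacit assumption $a_j>0$.
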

\begin{proof}
Let $\chi= \sum_{i=1}^{n-1}a_i\alpha_i$, where $a_i \in \mathbb {N}$.
Suppose there is a $2\leq i \leq n-2$ such that $l(ws_i)=l(w)-1$. Since $w\chi\leq 0$ we have $a_{i-1}+a_{i+1}\leq a_i$. Since $\langle \chi , \check{\alpha}_{i-1} \rangle \geq 0$ and $\langle \chi , \check{\alpha}_{i+1} \rangle \geq 0$ we have $2a_{i-1} \geq a_{i-2}+a_i$ and 
$2a_{i+1}\geq a_i+a_{i+2}$.\\
So, we have $2a_i \geq  2(a_{i-1}+a_{i+1})\geq2a_i+a_{i-2}+a_{i+2}$\\ $\Longrightarrow a_{i-2}+a_{i+2}=0 $\\ $\Longrightarrow a_{i-2}=a_{i+2}=0 $\\ $\Longrightarrow i-2 \leq 0$ and $i+2 \geq n $\\ $\Longrightarrow i=2 $ and $i=n-2 $\\ $\Longrightarrow i=2$ and $n=4$ which is contradiction to assumption $n\neq 4$. This completes the proof of the lemma.
\end{proof}

We now prove the following theorem.\\

\begin{theorem}
Let $G$ be a simple adjoint group over $\mathbb C$. Let $\chi \in X(T)^{+}_i$ be such that 
there is a Coxeter element $w \in W$ for which $X(w)^{ss}_T(\mathcal L_\chi) \neq \emptyset$, 
 the graded algebra 
$\oplus_{d \in \mathbb Z_{\geq 0}}H^0(G/B, \mathcal L_\chi^{\otimes d})^T$ is a 
polynomial ring if and only if  $dim(H^0(G/B , \mathcal L_{\chi})^T) \leq$ rank of $G$ .

\end{theorem}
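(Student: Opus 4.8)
The plan is to reduce the problem to a case analysis over simple types, using the earlier results as the backbone. The "only if" direction is almost immediate: a polynomial ring on $N$ generators has Krull dimension $N$, and $\oplus_d H^0(G/B,\mathcal L_\chi^{\otimes d})^T$ always has Krull dimension at most $n+1$ since $\oplus_d H^0(G/B,\mathcal L_\chi^{\otimes d})$ has Krull dimension $\dim(G/B)+1$ minus nothing useful directly — rather, the degree-one piece $H^0(G/B,\mathcal L_\chi)^T$ generates nothing forcing more, and one checks the transcendence degree of the invariant ring is at most $n$ because the $T$-action on the cone has generic orbits of dimension $n$ (the torus acts with finite stabilizer on semistable points). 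Hence if the ring is polynomial, the number of generators, which is at least $\dim H^0(G/B,\mathcal L_\chi)^T$ when the algebra is generated in degree one, is at most $n$; one must be a little careful and instead argue that if $\dim H^0(G/B,\mathcal L_\chi)^T > n$ then the $n$-dimensional invariant ring cannot be polynomial because it would need $>n$ algebraically independent generators. I would phrase the "only if" as the contrapositive: $\dim H^0(G/B,\mathcal L_\chi)^T > n$ forces non-polynomiality by the Krull dimension bound.

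For the "if" direction I would first invoke the classification from \cite{r6} (Theorem 4.2 there) of which Coxeter elements $w$ and which $\chi \in X(T)^+$ admit $X(w)^{ss}_T(\mathcal L_\chi)\neq\emptyset$; combined with the structure of Coxeter elements this severely restricts $(G,\chi)$. For types outside $A_{n-1}$ the list of such indecomposable $\chi$ is finite and small (as the introduction indicates for $A_2$ and $B_2$), so each can be handled by an explicit standard-monomial computation of the invariant ring, exhibiting it as a polynomial ring once $\dim H^0(G/B,\mathcal L_\chi)^T \le n$. The substantive case is $A_{n-1}$: Lemmas 4.2, 4.3 and 4.4 pin down that $\chi$ is either $\alpha_1+\cdots+\alpha_{n-1}$, or of the form $i\varpi_1+\varpi_{n-i}$, or $\varpi_i+i\varpi_{n-1}$, up to the action available via $w_0$ and the diagram automorphism; and Lemmas 4.5, 4.6 show that in the latter two families with $2\le i\le n-3$ one has $\dim H^0(G/B,\mathcal L_\chi)^T = \binom{n-1}{i} > n-1$, so those are exactly the cases where the hypothesis $\dim \le n$ fails and the ring is not polynomial, consistent with the theorem. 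So the remaining cases with $\dim \le n$ are $\chi = \alpha_1+\cdots+\alpha_{n-1}$ (the highest root, handled by Theorem 3.3, which identifies the invariant ring with $\mathbb C[\mathfrak h]$, a polynomial ring in $n$ variables) and the boundary cases $i=1$ or $i=n-2$ in the $\varpi$-families, together with $\chi=\varpi_1+\varpi_{n-1}=\alpha_0$ again. For the boundary cases I would compute the $T$-invariant standard monomials of shape $\chi$ explicitly and show the invariant subring is freely generated by the invariants in degree one.

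The key steps, in order: (1) prove the Krull-dimension bound $\dim\big(\oplus_d H^0(G/B,\mathcal L_\chi^{\otimes d})^T\big)\le n+1$ using that $T$ acts with generic finite stabilizer on the affine cone over the semistable locus, and deduce the "only if" direction via contrapositive; (2) use Lemma 4.7 together with the classification of \cite{r6} to list all admissible pairs $(G,\chi)$; (3) for type $A_{n-1}$, apply Lemmas 4.2--4.6 to split into the finitely many residual families and identify precisely which satisfy $\dim H^0(G/B,\mathcal L_\chi)^T \le n$; (4) in each surviving case exhibit an explicit isomorphism of $\oplus_d H^0(G/B,\mathcal L_\chi^{\otimes d})^T$ with a polynomial ring — using Theorem 3.3 for $\chi=\alpha_0$ and a direct standard-monomial basis count otherwise; (5) handle the non-$A$ types ($B_2$, $C_n$, and any sporadic small cases) by the same explicit method, noting the lists are short.

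The main obstacle I expect is step (4) in the $A_{n-1}$ boundary cases: showing the invariant ring is not merely generated in degree one with the right number of generators but is genuinely a \emph{free} polynomial algebra. This requires showing there are no relations among the degree-one invariants, i.e. that the $n$ (or fewer) $T$-invariant sections of $\mathcal L_\chi$ are algebraically independent and generate; the natural tool is to identify $\oplus_d H^0(G/B,\mathcal L_\chi^{\otimes d})^T$ with the homogeneous coordinate ring of a point-configuration GIT quotient of dimension exactly equal to the number of generators, or to use Wehlau's criterion (\cite{r14}) for polynomiality of torus invariants of a coordinate ring, applied after realizing the section ring as a subring of $\mathbb C[U_P^-]$ as in the proof of Theorem 3.3. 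Making the dimension count and the freeness argument uniform across the residual families, rather than type-by-type, is where the real work lies.
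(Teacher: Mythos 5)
Your overall strategy for the ``if'' direction --- a case analysis driven by the classification of admissible Coxeter elements in \cite{r6}, Lemmas 4.2--4.7 to pin down $\chi$ in type $A_{n-1}$, Theorem 3.3 for $\chi=\alpha_0$, and explicit standard-monomial or quadric computations in the remaining boundary cases --- is essentially the route the paper takes. The genuine gap is in your ``only if'' direction. You assert that the Krull dimension of $\oplus_{d}H^0(G/B,\mathcal L_\chi^{\otimes d})^T$ is at most $n$ (at one point $n+1$) ``because the $T$-action on the cone has generic orbits of dimension $n$.'' That inference is backwards: a generic orbit of dimension $n$ makes the Krull dimension of the invariant ring \emph{equal} to $\dim(G/P)+1-n$, where $P$ is the parabolic stabilizing $\mathcal L_\chi$, and this is $\le n$ only when $\dim(G/P)\le 2n-1$. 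For the characters that actually occur this fails badly: for $\chi=i\varpi_1+\varpi_{n-i}$ in type $A_{n-1}$ with $2\le i\le n-3$ one has $P=P_1\cap P_{n-i}$ and the invariant ring has Krull dimension $1+i(n-1-i)$, which exceeds the rank $n-1$ for every such $i$ (e.g.\ $21$ versus $9$ for $n=10$, $i=4$). So your contrapositive ``$\dim H^0(G/B,\mathcal L_\chi)^T>n$ forces non-polynomiality by the Krull dimension bound'' does not go through as stated.

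The repair is still a comparison of the degree-one piece with the Krull dimension, but with the \emph{correct} Krull dimension computed case by case, which is what the paper does. In a connected graded algebra any homogeneous generating set must contain a basis of the degree-one piece, so if the ring were polynomial its Krull dimension would be at least $\dim H^0(G/B,\mathcal L_\chi)^T$; one then verifies the combinatorial inequality $\binom{n}{i}>1+i(n-1-i)$ for $2\le i\le n-3$ (and the mirror family $\varpi_i+i\varpi_{n-1}$ via $-w_0$), which rules out polynomiality in exactly the cases where $\dim H^0(G/B,\mathcal L_\chi)^T$ exceeds the rank. Be aware that the biconditional is not obtained from any uniform principle in either direction: the paper simply checks, pair by pair $(G,\chi)$, that polynomiality and the bound $\dim H^0(G/B,\mathcal L_\chi)^T\le n$ always have the same truth value, so you must supply an explicit non-polynomiality proof in each offending case rather than appeal to a general dimension bound.
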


\begin{proof}
 We prove the theorem by using case by case analysis. 

For a given simple adjoint group $G$ and for any indecomposable dominant character $\chi$ of $T$ such that 
$X(w)^{ss}_T(\mathcal L_\chi) \neq \emptyset$ for some Coxeter element $w$ in $W$, we prove that either the graded algebra 
$\oplus_{d \in \mathbb Z_{\geq 0}}H^0(G/B, \mathcal L_\chi^{\otimes d})^T$ is a 
polynomial ring and  $dim(H^0(G/B , \mathcal L_{\chi})^T) \leq$ rank of $G$  or the graded algebra \\
$\oplus_{d \in \mathbb Z_{\geq 0}}H^0(G/B, \mathcal L_\chi^{\otimes d})^T$ is a not
polynomial ring and  $dim(H^0(G/B , \mathcal L_{\chi})^T) >$ rank of $G$ .

 \underline{Type $A_{n-1}$, $n\neq 4$}: 
 
 From lemma 4.7, if $w \in W$ a Coxeter elements $w$ such that $X(w)_{T}^{ss}(L_{\chi})$ is non empty  then 
$w=s_{i+1}.\cdots s_{n-1}s_{i}.\cdots s_1$ for some $1\leq i \leq n-2$ or $w=s_1.\cdots s_{n-1}$.

When $w=s_{n-1}.\cdots s_1$ using lemma 4.2, the indecomposable dominant character $\chi$ of $T$ for which 
 $X(w)_{T}^{ss}(L_{\chi})$ is non empty are $\chi=i\varpi_1+\varpi_{n-i}$ for 
$1\leq i \leq n-1$.

\underline{If i=n-1}: $\chi=n\varpi_1$ , in this case there is only one $T$ invariant monomial. \\
Hence $\oplus_{d \in \mathbb Z_{\geq 0}}H^0(G/B, \mathcal L_\chi^{\otimes d})^T$ is a 
polynomial ring in one variable.\\
\underline{If i=n-2}:\\
we have $\chi=(n-2)\varpi_1+\varpi_2$. \\
$dim(H^0($G/B$,\mathcal L_\chi)^T)=n-1$. \\
Consider the map $\phi : \mathbb C[X_1,\cdots,X_{n-1}] \longrightarrow \oplus_{d \in \mathbb Z_{\geq 0}}H^0(G/B, \mathcal L_\chi^{\otimes d})^T$ 
is given by \\$X_i \longmapsto p_{1i}p_{23\cdots \hat{i} \cdots n}$ where $p_{1i}, p_{23\cdots \hat{i} \cdots n}$ Pl\"{u}ker co-ordinates and $\hat{i}$ denotes omiting of $i$ .\\ 
 Using the standard monomial of shape $d\chi$ we can see that $\phi$ is surjective.\\
So we have 

(1) the surjective map $\phi: \mathbb C[X_1,\cdots,X_{n-1}] \longrightarrow \oplus_{d \in \mathbb Z_{\geq 0}}H^0(G/B, \mathcal L_\chi^{\otimes d})^T$.\\

Let $P=P_1 \cap P_2$. 
Since $\oplus_{d \in \mathbb Z_{\geq 0}}H^0(G/B, \mathcal L_\chi^{\otimes d})^T =\oplus_{d \in \mathbb Z_{\geq 0}}H^0(G/P, \mathcal L_\chi^{\otimes d})^T$, we have \\ 

(2) the krull dimension of $\oplus_{d \in \mathbb Z_{\geq 0}}H^0(G/B, \mathcal L_\chi^{\otimes d})^T = n-1$.\\

From (1) and (2) we conclude that the map $ \mathbb C[X_1,\cdots,X_{n-1}] \longrightarrow \oplus_{d \in \mathbb Z_{\geq 0}}H^0(G/B, \mathcal L_\chi^{\otimes d})^T$
 is an isomorphism.\\
 Hence $\oplus_{d \in \mathbb Z_{\geq 0}}H^0(G/B, \mathcal L_\chi^{\otimes d})^T $
is a polynomial ring.\\
\underline{If $2\leq i \leq n-3$}:\\
$\chi=i\varpi_1+\varpi_{n-i}$, by the lemma 4.5 we have 
$dim(H^0($G/B$,\mathcal L_\chi)^T) > n-1$.\\
\underline{claim}: $\oplus_{d \in \mathbb Z_{\geq 0}}H^0(G/B, \mathcal L_\chi^{\otimes d})^T$
is not a polynomial ring.\\
Let $P=P_1 \cap P_{n-i}$. 
Since $\oplus_{d \in \mathbb Z_{\geq 0}}H^0(G/B, \mathcal L_\chi^{\otimes d})^T =\oplus_{d \in \mathbb Z_{\geq 0}}H^0(G/P, \mathcal L_\chi^{\otimes d})^T$, we have the krull dimension of $\oplus_{d \in \mathbb Z_{\geq 0}}H^0(G/B, \mathcal L_\chi^{\otimes d})^T = 1+i(n-1-i)$.
\\ On the other hand we have $dim(H^0($G/B$,\mathcal L_\chi)^T)={n\choose i}$. \\
 With out loss of generality, we may assume that $i\leq (n-1)/2$.\\
So, we have $i(n-1-i)<(n-1)(n-i)/2 $.\\
 Since $i\leq n-3 , i-j<n-(j+2)$ for $j=1,\cdots,i-3$.\\
 So $(n-2)(n-3)\cdots (n-i+1)>3.4.\cdots.i$\\
 $\Longrightarrow {n\choose i}> 1+i(n-1-i)$.\\
 Then we have $ dim(H^0($G/B$,\mathcal L_\chi)^T)$ $>$ krull dimension of $\oplus_{d \in \mathbb Z_{\geq 0}}H^0(G/B, \mathcal L_\chi^{\otimes d})^T $.\\
  Hence 
$\oplus_{d \in \mathbb Z_{\geq 0}}H^0(G/B, \mathcal L_\chi^{\otimes d})^T$
is not a polynomial ring.\\
 \underline{If $i=1$}:\\
we have $\chi=\varpi_1+\varpi_{n-1}=\alpha_1+\cdots +\alpha_{n-1}$. By the theorem 3.3, we have 
the ring 
 $\bigoplus_{d\in \mathbb Z_{\geq 0}}H^{0}(G/B , \mathcal L_{\alpha_1+\cdots +\alpha_{n-1}}^{\otimes d})^T$ is a polynomial ring.

When $w=s_{1}.\cdots s_{n-1}$, By the lemma 2.7, the indecomposable dominant character $\chi$ of $T$ for which $X(w)_{T}^{ss}(L_{\chi})$ is non empty are\\
$\chi=\varpi_i+i\varpi_{n-1}$ for $1\leq i \leq n-1$.\\
\underline{If $i=n-1$}:\\
we have $\chi=n\varpi_{n-1}$.\\ 
Since $-w_0\chi=n\varpi_1, \oplus_{d \in \mathbb Z_{\geq 0}}H^0(G/B, \mathcal L_\chi^{\otimes d})^T$ and
 $\oplus_{d \in \mathbb Z_{\geq 0}}H^0(G/B, \mathcal L_{n\varpi_1}^{\otimes d})^T$ are isomorphic.
 We proved that $\oplus_{d \in \mathbb Z_{\geq 0}}H^0(G/B, \mathcal L_{n\varpi_1}^{\otimes d})^T$ is a polynomial ring.\\
 Hence $\oplus_{d \in \mathbb Z_{\geq 0}}H^0(G/B, \mathcal L_\chi^{\otimes d})^T$ is a polynomial ring.\\
\underline{If i=n-2}: \\
In this case $\chi=\varpi_{n-2}+(n-2)\varpi_{n-1}$.\\
Since $-w_0\chi=\varpi_2+(n-2)\varpi_1, \oplus_{d \in \mathbb Z_{\geq 0}}H^0(G/B, \mathcal L_\chi^{\otimes d})^T$ and
 $\oplus_{d \in \mathbb Z_{\geq 0}}H^0(G/B, \mathcal L_{\varpi_2+(n-2)\varpi_1}^{\otimes d})^T$ are isomorphic.\\
 We proved that $\oplus_{d \in \mathbb Z_{\geq 0}}H^0(G/B, \mathcal L_{\varpi_2+(n-2)\varpi_1}^{\otimes d})^T$
 is a polynomial ring. \\
 Hence $\oplus_{d \in \mathbb Z_{\geq 0}}H^0(G/B, \mathcal L_\chi^{\otimes d})^T$ is a polynomial ring.\\
 \underline{If $2\leq i \leq n-3$}:\\
 $\chi=\varpi_i+i\varpi_{n-1}$, by the lemma 4.6, we have 
$dim(H^0($G/B$,\mathcal L_\chi)^T) > n-1$.\\
\underline{claim}: $\oplus_{d \in \mathbb Z_{\geq 0}}H^0(G/B, \mathcal L_\chi^{\otimes d})^T$
is not a polynomial ring.\\
 Since $-w_0\chi=\varpi_{n-i}+i\varpi_1, \oplus_{d \in \mathbb Z_{\geq 0}}H^0(G/B, \mathcal L_\chi^{\otimes d})^T$ and
 $\oplus_{d \in \mathbb Z_{\geq 0}}H^0(G/B, \mathcal L_{\varpi_{n-i}+i\varpi_1}^{\otimes d})^T$ are isomorphic.\\
 We proved that $\oplus_{d \in \mathbb Z_{\geq 0}}H^0(G/B, \mathcal L_{\varpi_{n-i}+i\varpi_1}^{\otimes d})^T$
 is not a polynomial ring. \\
 Hence $\oplus_{d \in \mathbb Z_{\geq 0}}H^0(G/B, \mathcal L_\chi^{\otimes d})^T$ is not a polynomial ring.
  
\underline{If $i=1$}:\\
we have $\chi=\varpi_1+\varpi_{n-1}=\alpha_1+\cdots +\alpha_{n-1}$. By the theorem 3.3, we have 
the ring 
 $\bigoplus_{d\in \mathbb Z_{\geq 0}}H^{0}(G/B , \mathcal L_{\alpha_1+\cdots +\alpha_{n-1}}^{\otimes d})^T$ is a polynomial ring.\\

When $w=s_{i+1}\cdots s_{n-1}s_i\cdots s_1$ , where $2\leq i\leq n-3$ , By the lemma 4.4,  the indecomposable dominant character $\chi$ such that $X(w)_{T}^{ss}(L_{\chi})$ is non empty is $\alpha_1+\cdots +\alpha_{n-1}$. By the theorem 2.3, the ring  $\bigoplus_{d\in \mathbb Z_{\geq 0}}H^{0}(G/B , \mathcal L_{\alpha_1+\cdots +\alpha_{n-1}}^{\otimes d})^T$ is a polynomial ring.

\underline{Type $A_{3}$}:

The indecomposable dominant characters for which there is a coxeter $w$ such that $X(w)_{T}^{ss}(L_{\chi})$ is non empty
are $\alpha_{1}+\alpha_{2}+\alpha_{3}$, $3\alpha_{1}+2\alpha_{2}+\alpha_{3}$,
$\alpha_{1}+2\alpha_{2}+\alpha_{3}$ and 
$\alpha_{1}+2\alpha_{2}+3\alpha_{3}$.

When $\chi=\alpha_1+\alpha_2+\alpha_3=\alpha_0$, by theorem 3.3, we have  $\bigoplus_{d\in \mathbb Z_{\geq 0}}H^{0}(G/B , \mathcal L_{\chi}^{\otimes d})^T$ is a polynomial ring .

When $\chi=3\alpha_{1}+2\alpha_{2}+\alpha_{3}=4\varpi_1$, there is only one $T$ invariant monomial. \\
Hence $\oplus_{d \in \mathbb Z_{\geq 0}}H^0(G/B, \mathcal L_\chi^{\otimes d})^T$ is a 
polynomial ring in one variable.

When $\chi=\alpha_{1}+2\alpha_{2}+3\alpha_{3}=4\varpi_3$.
Since $-w_0\chi=4\varpi_1, \oplus_{d \in \mathbb Z_{\geq 0}}H^0(G/B, \mathcal L_\chi^{\otimes d})^T$ and
 $\oplus_{d \in \mathbb Z_{\geq 0}}H^0(G/B, \mathcal L_{4\varpi_1}^{\otimes d})^T$ are isomorphic.\\
  Hence $\oplus_{d \in \mathbb Z_{\geq 0}}H^0(G/B, \mathcal L_\chi^{\otimes d})^T$ is a polynomial ring. 

Now we deal the special case of $\chi=2\varpi_{2}$ in $A_{3}$.
In this case the Coxeter element is $w=s_1s_3s_2$. 
Let a typical monomial  
$\prod_{i <j}p_{ij}^{m_{ij}}$ in the Pl\"{u}ker co-ordinates which is $T$-invariant. 
Then it is easy to see that each of the indices $1,2,3,4$ occur same number of 
times. So if $p_{12}$ (resp. $p_{13}$) is a factor of $T$ invariant monomial $M$, then $p_{34}$ (resp. $p_{24}$) 
is also a factor of $M$. Also, if $p_{14}$ is a factor of $T$ invariant monomial $M$, then $p_{23}$ also a factor of $M$. But by the Pl\"{u}ker relation we have 
\[p_{14}p_{23}=p_{13}p_{24}-p_{12}p_{34}.\] So, $p_{13}p_{24}$ and $p_{12}p_{34}$
generate the ring of $T$ invariants of $\bigoplus_{d\in \mathbb Z_{\geq 0}}H^{0}(G/P_2,\mathcal L_\chi^{\otimes d })$.
where $P_2$ is the the maximal parabolic subgroup associated to $\alpha_2$.\\

(1) Hence we have a surjective map $\mathbb C[p_{13}p_{24},p_{12}p_{34}] \longrightarrow \bigoplus_{d\in \mathbb Z_{\geq 0}}H^{0}(G/P_2,\mathcal L_\chi^{\otimes d })^T$.\\

(2) The krull dimension of $\bigoplus_{d\in \mathbb Z_{\geq 0}}H^{0}(G/P_2,\mathcal L_\chi^{\otimes d })^T$ is two.\\

From (1) and (2) we conclude that the map $\mathbb C[p_{13}p_{24},p_{12}p_{34}] \longrightarrow \bigoplus_{d\in \mathbb Z_{\geq 0}}H^{0}(G/P_2,\mathcal L_\chi^{\otimes d })^T$ 
is an isomorphism.\\ 
So, $\oplus_{d \in \mathbb Z_{\geq 0}}H^0(G/P_2, \mathcal L_{2\varpi_2}^{\otimes d})^T$ is a polynomial algebra. \\
Hence $\oplus_{d \in \mathbb Z_{\geq 0}}H^0(G/B, \mathcal L_{2\varpi_2}^{\otimes d})^T$ is a polynomial algebra.\\
\\
This completes the proof for the type $A_{n-1}$.\\

\underline{Type $B_n,n\neq 2$}:

By the theorem 4.2 in \cite{r6}, when $G$ is of type $B_n$, the coxeter elements $w$ for which there is dominant
character such that $X(w)_{T}^{ss}(L_{\chi})$ is non empty is $s_ns_{n-1}\cdots s_2s_1$.
The indicomposable dominant character with this property is $\chi=\alpha_1+\alpha_2+\cdots +\alpha_n=\varpi_1$.\\    

  Now consider the standard representation $\mathbb C^{2n+1}$ of $SO_{2n+1}$. Then
\[(1) \hspace{3cm} dim(Sym^2(\mathbb C^{2n+1})^*))=(n+1)(2n+1).\]

By Weyl dimension formula, the dimension of the irreducible representation $V(2\varpi_1)$ of $SO_{2n+1}$
is 
\[\prod_{\alpha \in \Phi^+}\frac{\langle 2\varpi_1+\rho, \check{\alpha} \rangle}
{\langle \rho , \check{\alpha} \rangle}.\]

Again since $\langle 2\varpi_1+\rho, \check{\alpha} \rangle= \langle \rho , \check{\alpha} \rangle$ for $\alpha \ngeq \alpha_1$, we have 
\[dim(V(2\varpi_1))= \prod_{\alpha \in \Phi^+, \alpha \geq \alpha_1}\frac{\langle 2\varpi_1+\rho, \check{\alpha} \rangle}{\langle \rho , \check{\alpha} \rangle}.\]
The set of $\alpha \in \Phi^+$ such that $\alpha \geq \alpha_1$ is $\{\alpha_1, \alpha_1+\alpha_2, \cdots ,\alpha_1+\alpha_2+\cdots +\alpha_n, \alpha_1+\alpha_2+\cdots +\alpha_{n-1}+2\alpha_n, \cdots, \alpha_1+2(\alpha_2+\cdots +\alpha_n)\}$.\\
We now calculate $\frac{\langle 2\varpi_1+\rho, \check{\alpha} \rangle}{\langle \rho , \check{\alpha} \rangle}$ for all $\alpha \geq \alpha_1$ .\\
\[\frac{\langle 2\varpi_1+\rho, \check{\alpha_1+\cdots +\alpha_i} \rangle}{\langle \rho , \check{\alpha_1+\cdots \alpha_i} \rangle}=\frac{i+2}{i}, \hspace{0.5cm} 1 \leq i \leq n-1.\]

\[\frac{\langle 2\varpi_1+\rho, \check{\alpha_1+\cdots +\alpha_n} \rangle}{\langle \rho , \check{\alpha_1+\cdots \alpha_n} \rangle}=\frac{2n+3}{2n-1}.\]

\[\frac{\langle 2\varpi_1+\rho,\check{\alpha_1+\cdots +\alpha_{i-1}+2(\alpha_i+\cdots \alpha_n)} \rangle}{\langle \rho , \check{\alpha_1+\cdots +\alpha_{j-1}+2(\alpha_j+\cdots \alpha_n)}\rangle}=\frac{2n-j+2}{2n-j}, \hspace{0.5cm} 2 \leq j \leq n.\]\\
Hence 
\[(2) \hspace{3cm} dim(V(2\varpi_1))= \prod_{\alpha \in \Phi^+, \alpha \geq \alpha_1}\frac{\langle 2\varpi_1+\rho, \check{\alpha} \rangle}{\langle \rho , \check{\alpha} \rangle}= n(2n+3).\]

From (1) and (2) we can conclude that $(Sym^2((\mathbb C^{2n+1})^*)^{SO_{2n+1}}$ is one dimensional, namely generated by the quadratic form $q$ which defines the orthogonal group $SO_{2n+1}$. Hence we have 
\[Sym^2(\mathbb C^{2n+1})^*=V(2\varpi_1)^*+\mathbb C q.\] 
where  $q=\sum_{i=1}^nX_iX_{2n+2-i}$.  Since $q$-vanishes on $SO_{2n+1}(\mathbb C)/P_1$, where $P_1$ is the maximal parabolic associated to $\alpha_1$,
there is a unique quadratic  relation among the variables 
$X_iX_{2n+2-i}, \,\, i=1,2,\cdots ,n+1$ on $SO_{2n+1}(\mathbb C)/P_1$ , namely $aX_{n+1}^2=\sum_{i}^nX_iX_{2n+2-i}$ for some non zero $a\in \mathbb C$ on $SO_{2n+1}(\mathbb C)/P_1$ (refer to \cite{r7}).

Now we explain all the $T$-invariant polynomials restricted to $SO_{2n+1}(\mathbb C)/P_1$. Take a $T$-invariant polynomial $X_1^{m_1}X_2^{m_2}\cdots X_{2n+1}^{m_{2n+1}}$ with $m_i=m_{2n+2-i}$. The above relatilon  implies that every $T$-invariant 
polynomial restricted to  $SO_{2n+1}(\mathbb C)/P_1$ is a linear combination of the monomials of the form $(X_1X_{2n+1})^{r_1}(X_2X_{2n})^{r_2}\cdots (X_{n-1}X_{n+3})^{r_{n-1}}(X_nX_{n+2})^{r_n}$ for some $ r_i's$ in $\mathbb Z_{\geq 0}$. Thus 

(3) the map 
$\mathbb C[X_1X_{2n+1}, X_2X_{2n}, \cdots X_{n-1}X_{n+3},X_nX_{n+2}] \rightarrow \oplus_{d \in \mathbb Z_{\geq 0}}H^0(G/P_1, \mathcal L_{2\varpi_1}^{\otimes d})^T$ is onto.  
\\

On the other hand we have $dim(U_{P_1}^-)= |\{\alpha \in R^+: \alpha \geq \alpha_1\}|=2n-1$, where $U^-_{P_1}$ be unipotent radical of the  opposite parabolic subgroup of $P_1$ determind by $T$ and $B$.\\
Since $U_{P_1}^-$ is open subset of $G/P_1$, the dimension of the affine cone over $G/P_1$ is of dimension $2n$.
So we have 

(4) the Krull dimension of $\oplus_{d \in \mathbb Z_{\geq 0}}H^0(G/P_1, \mathcal L_{2\varpi_1}^{\otimes d})^T$ is $2n-n=n$.  
\\

From (3) and (4) we conclude that \\$\oplus_{d \in \mathbb Z_{\geq 0}}H^0(G/P_1, \mathcal L_{\varpi_1}^{\otimes d})^T = \mathbb C[X_1X_{2n+1},X_2X_{2n},\cdots,X_{n-1}X_{n+3},X_nX_{n+2}]$ is a polynomial ring. \\
Hence $\oplus_{d \in \mathbb Z_{\geq 0}}H^0(G/B, \mathcal L_{\varpi_1}^{\otimes d})^T$ is a polynomial ring.
\\

Now we prove that $dim(H^0(G/B , \mathcal L_{2\varpi_1})^T) \leq$ rank of $G$ .\\

 The $T$ invariant monomials in $Sym^2((\mathbb C^{2n+1})^*)$ are of the form $X_iX_j$ for some $ 1\leq i \leq n+1, j=2n+2-i$  .\\
Hence we have $dim(Sym^2((\mathbb C^{2n+1})^*))^T)$ is $n+1$.\\
Since $Sym^2((\mathbb C^{2n+1})^*)=V(2\varpi_1)^*+\mathbb C q$, $dim(V(2\varpi_1)^T)=n$.\\
Hence $dim(H^0(G/B , \mathcal L_{2\varpi_1})^T) =$ rank of $G$ .\\

This completes the proof for type $B_n,n\neq 2$.\\

\underline{Type $B_2$:}

By the theorem 4.2 in \cite{r6}, when $G$ is of type $B_2$, the coxeter elements $w$ for which there is dominant
character such that $X(w)_{T}^{ss}(L_{\chi})$ is non empty are $ s_2s_1$ and $s_1s_2$.
The indicomposable dominant character with this property for the coxeter element $s_2s_1$ is $\chi=\alpha_1+\alpha_2$.\\  
In this case using similar arguements as in type $B_n,n \neq 2$, we can prove that the ring $\oplus_{d \in \mathbb Z_{\geq 0}}H^0(G/B, \mathcal L_{\chi}^{\otimes d})^T$ is a polynomial ring.\\

The indicomposable dominant character $\chi$ for the coxeter element $w=s_1s_2$  for which $X(w)_{T}^{ss}(L_{\chi})$ is non empty is $\chi=\alpha_1+2\alpha_2=\alpha_0$. 
By the theorem 3.3, the ring $\oplus_{d \in \mathbb Z_{\geq 0}}H^0(G/B, \mathcal L_{\alpha_0}^{\otimes d})^T$ is a polynomial ring.\\

\underline{Type $C_n$:}

When $G$ is of the type $C_n$, by theorem 4.2 of \cite{r6}, the only coxeter element $w$ for which there is dominant
character such that $X(w)_{T}^{ss}(L_{\chi})$ is non empty is $w=s_ns_{n-1}\cdots s_2s_1$.\\
Further, the indicomposable dominant character $\chi$ with this property is $2\varpi_1=2(\sum_{i\neq n}\alpha_i)+\alpha_n=\alpha_0$. \\
By the theorem 3.3,  the ring of $T$ invarinats  $\oplus_{d \in \mathbb Z_{\geq 0}}H^0(G/B, \mathcal L_{\varpi_1}^{\otimes d})^T$ is a polynomial ring .
\\

We now prove that $dim(H^0(G/B , \mathcal L_{2\varpi_1})^T) =$ rank of $G$ .\\
Since $\chi=\alpha_0$, $H^0(G/B , \mathcal L_{\chi})=\mathfrak{g}$.\\
So, we have $H^0(G/B , \mathcal L_{\chi})^T=\mathfrak{h}$.\\
Hence $dim(H^0(G/B , \mathcal L_{2\varpi_1})^T) =$ rank of $G$ .\\

\underline{Type $D_4$:} 

By theorem 4.2 of \cite{r6}, the only coxeter elements $w \in W$ for which there exist a dominant weight $\chi$ such that $X(w)_{T}^{ss}(L_{\chi})$ is non empty are $ s_4s_3s_2s_1, s_4s_1s_2s_3$ and $s_3s_1s_2s_4$. \\
The indecomposable dominat characters with this property are $ 2(\alpha_1+\alpha_2)+\alpha_3+\alpha_4$, $2(\alpha_3+\alpha_2)+\alpha_1+\alpha_4$ and
$2(\alpha_4+\alpha_2)+\alpha_1+ \alpha_3$ to the coxeter elements $ s_4s_3s_2s_1, s_4s_1s_2s_3$ and $s_3s_1s_2s_4$  respectively.

Since there is an automorphism of the Dynkin diagram sending $\alpha_1$ to $\alpha_3$ and fixing $\alpha_2$ and $\alpha_4$ and there is also an automorphism that sends $\alpha_1$ to $\alpha_4$ and fixing $\alpha_2$ and $\alpha_3$.\\
If $\sigma'$ is an automorphism of dynkin diagram, we get a $\sigma:G\rightarrow G$ automorphism of algebraic groups such that $\sigma(B)=B$, $\sigma(T)=T$ and $\sigma (\alpha_i)=\sigma '(\alpha_i)$ for all $i=1,\cdots,4.$\\Further, we have 
$H^0(G/B, \mathcal L_\chi)$ and $ H^0(G/B, \mathcal L_{\sigma(\chi)})$ are isomorphic as $G$-modules where the action of $G$ on $ H^0(G/B, \mathcal L_{\sigma(\chi)})$ via $\sigma$.\\
Thus, $ H^0(G/B, \mathcal L_\chi)^T= H^0(G/B, \mathcal L_{\sigma(\chi)})^T$.\\
So it is sufficient to consider the case when $\chi=2\varpi_1=2\alpha_1+2\alpha_2+\alpha_3+\alpha_4$.

Now consider the standard representation $\mathbb C^{8}$ of $SO_{8}$. Then, we have
\[(1) \hspace{3cm} dim(Sym^2((\mathbb C^{8})^*))=36.\] 

By using the Weyl dimension formula and by proceeding with similar calculation above we can see that the dimension of the irreducible representation $V(2\varpi_1)$ is 35. \hspace{1cm} (2)

From (1) and (2) we have  
\[Sym^2((\mathbb C^8)^*)=V(2\varpi_1)^*+\mathbb C q.\] 
where  $q=\sum_{i=1}^8X_iX_{9-i}$.  Since $q$-vanishes on $SO_{8}(\mathbb C)/P_1$, where $P_1$ is the maximal parabolic associated to $\alpha_1$,
there is a unique quadratic  relation among the variables $X_iX_{9-i}, \,\, i=1,2,3,4$ on $SO_{8}(\mathbb C)/P_1$, namely $-X_1X_8=\sum_{i}^4X_iX_{9-i}$ on $SO_{8}(\mathbb C)/P_1$(refer to \cite{r7}).

Now we explain all the $T$-invariant polynomials restricted to $SO_{8}(\mathbb C)/P_1$. Take a $T$-invariant polynomial $X_1^{m_1}X_2^{m_2}\cdots X_{8}^{m_{8}}$ with $m_i=m_{9-i}$. The above relatilon  implies that every $T$-invariant 
polynomial restricted to  $SO_{8}(\mathbb C)/P_1$ is a linear combination of the monomials of the form $(X_2X_{7})^{m_2}(X_3X_6)^{m_3}\cdots (X_4X_5)^{m_4}$. Thus 

(3) the map 
$\mathbb C[X_2X_7, X_3X_6, X_{4}X_{5}] \rightarrow \oplus_{d \in \mathbb Z_{\geq 0} }H^0(G/P_1, \mathcal L_{2\varpi_1}^{\otimes d})^T$ is onto.\\

On the other hand we have $dim(U_{P_1}^-)= |\{\alpha \in R^+: \alpha \geq \alpha_1\}|=6$, where $U^-_{P_1}$ be unipotent radical of the  opposite parabolic subgroup of $P_1$ determind by $T$ and $B$.\\
 Hence the dimension of the affine cone $G/P_1$ is of dimension $7$.\\
So we have 

(4) the Krull dimension of $\oplus_{d \in \mathbb Z_{\geq 0}}H^0(G/P_1, \mathcal L_{2\varpi_1}^{\otimes d})^T$ is $3$. 
\\

From (3) and (4) we conclude that $\oplus_{d \in \mathbb Z_{\geq 0}}H^0(G/P_1, \mathcal L_{2\varpi_1}^{\otimes d})^T$ is a polynomial ring.
\\
Hence $\oplus_{d \in \mathbb Z_{\geq 0}}H^0(G/B, \mathcal L_{2\varpi_1}^{\otimes d})^T$ is a polynomial ring.
\\

Now we prove that $dim(H^0(G/B , \mathcal L_{2\varpi_1})^T) \leq$ rank of $G$ .\\

The $T$ invariant monomials in $Sym^2((\mathbb C^{8})^*)$ are of the form $X_iX_j$ for $1\leq i \leq 4, j=9-i$.\\
Hence we have $dim(Sym^2((\mathbb C^{8})^*)^T)$ is $4$.\\
Since $Sym^2((\mathbb C^{8})^*)=V(2\varpi_1)^*+\mathbb C q$, $dim(V(2\varpi_1)^T)=3$.\\
Hence $dim(H^0(G/B , \mathcal L_{2\varpi_1})^T) \leq $ rank of $G$ .\\

\underline{Type $D_n,n\neq 4$:}

By theorem 4.2 of \cite{r6}, the coxeter element $w$ for which there is dominant
character such that $X(w)_{T}^{ss}(L_{\chi})$ is non empty is $s_ns_{n-1}\cdots s_2s_1$.
The indicomposable dominant character with this property is $\chi=2\varpi_1=2(\alpha_1+\cdots+\alpha_{n-2})+\alpha_{n-1}+\alpha_n$.
\\ 
Proof in this case is similar to that of $\chi = 2\varpi_{1}$ in type $B_{n}$.\\

\underline{For other types :}

By theorem 4.2 of \cite{r6}, there are no coxeter element $w$ and dominant
character $\chi$ such that $X(w)_{T}^{ss}(L_{\chi})$ is non empty.

\end{proof}

\end{document}